\newcounter{theorem}[section]\setcounter{theorem}{0}
\renewcommand{\thetheorem}{\arabic{section}.\arabic{theorem}}
\newenvironment{theorem}{\par\medskip\noindent\refstepcounter{theorem}
\bgroup{\hspace*{-0.15 cm}\bf{Theorem}
\thetheorem.}\bgroup\it}{\egroup \egroup\par\medskip}
\newenvironment{lemma}{\par\medskip\noindent\refstepcounter{theorem}
\bgroup{\hspace*{-0.15 cm}\bf{Lemma} \thetheorem.}\bgroup\it}{\egroup
\egroup\par\medskip}
\newenvironment{proposition}{\par\medskip\noindent\refstepcounter{theorem}
\bgroup{\hspace*{-0.15 cm}\bf{Proposition}
\thetheorem.}\bgroup\it}{\egroup \egroup\par\medskip}
\newenvironment{corollary}{\par\medskip\noindent\refstepcounter{theorem}
\bgroup{\hspace*{-0.15 cm}\bf{Corollary}
\thetheorem.}\bgroup\it}{\egroup \egroup\par\medskip}
\newenvironment{definition}{\par\medskip\noindent\refstepcounter{theorem}
\bgroup{\hspace*{-0.15 cm}\bf{Definition}
\thetheorem.}\bgroup}{\egroup \egroup\par\medskip}
\newenvironment{example}{\par\medskip\noindent\refstepcounter{theorem}
\bgroup{\hspace*{-0.15 cm}\bf{Example} \thetheorem.}\bgroup}{\egroup
\egroup\par\medskip}
\newenvironment{remark}{\par\medskip\noindent\refstepcounter{theorem}
\bgroup{\hspace*{-0.15 cm}\bf{Remark} \thetheorem.}\bgroup}{\egroup
\egroup\par\medskip}
\newcommand{\C}{\mathbb C}
\newcommand{\R}{\mathbb R}
\newcommand{\Q}{\mathbb Q}
\newcommand{\Z}{\mathbb Z}
\newcommand{\F}{\mathbb F}
\def\O{{\cal O}}
\def\a{{\mathfrak a}}
\def\b{{\mathfrak b}}
\title{Weight Reduction for Mod $\ell$ Bianchi Modular Forms}  
\author{Mehmet Haluk \c{S}eng\"{u}n and Seyfi T\"urkelli}
\date{}
\begin{document}
\maketitle

\abstract{Let $K$ be an imaginary quadratic field with class number one. We prove that mod $\ell$, a system of Hecke eigenvalues occurring in the first cohomology group of some congruence subgroup $\Gamma$ of SL$_2(\O_K)$ can be realized, up to twist, in the first cohomology with trivial coefficients after increasing the level of $\Gamma$ by $(\ell)$.}

\footnotetext[1]{{\it 2000 Mathematics Subject Classification}. Primary ; Secondary }
\footnotetext[2]{{\it Key words and phrases.} Bianchi groups, imaginary quadratic fields, eigenvalue systems }


 \section{Motivation and Summary}
 Let $\bf{G}$ be a connected semisimple algebraic group defined over $\Q$. Let $K$ be a maximal compact subgroup of the group of 
real points $G=\textbf{G}(\R)$ of $\bf{G}$ and denote by $X=G/K$ the associated global Riemannian symmetric space. A torsion-free arithmetic subgroup $\Gamma$ of $G$ acts properly and freely on $X$. In this case, the the locally symmetric space $\Gamma \backslash X$ is an 
Eilenberg-MacLane space for $\Gamma$ and the cohomology of $\Gamma$ is equal to the cohomology of $\Gamma \backslash X$.
That is

$$H^*(\Gamma ,E) \simeq H^*(\Gamma \backslash X,\tilde E )$$ 

\noindent
where $E$ is a rational finite dimensional representation of $G$ over $\C$ and $\tilde E$ is the local system that $E$ induces on $\Gamma \backslash X$. A theorem of Franke \cite{fr} describes the cohomology spaces $H^*(\Gamma ,E)$ in terms of the automorphic forms attached to $\bf{G}$. If we take $\textbf{G}=\textbf{SL}_2$, then the Eichler-Shimura theorem \cite[Chapter 8]{sh} says that the automorphic forms that appear in the cohomology spaces $H^1(\Gamma ,E)$ are the classical modular forms. 

 Motivated by the above paragraph, we define a \textit{Bianchi modular form} over an imaginary quadratic field $K$ as an automorphic form attached to $G=\text{Res}_{K/\Q}(\textbf{SL}_2)$ that appears in some $H^1(\Gamma,E(\C))$ where $\Gamma$ is a congruence subgroup of SL$_2(\O_K)$ (the level) and $E(\C)$ is a rational finite-dimensional representation of GL$_2(\C)$ over $\C$ (the weight). 
 
 Harris-Soudry-Taylor, Taylor and Berger-Harcos \cite{hst, ta1, bh}, under some hypothesis, were able to attach compatible families of $\lambda$-adic Galois representations of $K$ to Bianchi modular forms in accordance with Langlands philosophy. In the reverse direction, it is natural to ask if mod $\ell$ Galois representations of $K$ arise from mod $\ell$ Bianchi modular forms. We define a \textit{mod $\ell$ Bianchi modular form} as a cohomology class in some $H^1(\Gamma,E \otimes \overline{\F}_{\ell})$ where $E$ is a rational finite-dimensional representation of GL$_2(\O_K / (\ell))$ over $\F_{\ell}$. Unlike the case of the classical modular forms, mod $\ell$ Bianchi modular forms are not merely reductions of the (char 0) Bianchi modular forms. This is due to the possible torsion in the cohomology with coefficients over $\O_K$, see Taylor's thesis \cite{ta2}. 

Elstrod-Grunewald-Mennicke \cite{egm} were the first investigators of the connection between mod $\ell$ Bianchi modular forms and mod $\ell$ Galois
representations of imaginary quadratic fields. In his paper \cite{fi}, Figueiredo considered an analogue of Serre's conjecture in this setting but he only considered mod $\ell$ Bianchi modular forms in cohomology spaces with trivial coefficients. Motivated by a result of Ash and Stevens \cite{as2} for the classical modular forms, he assumed that a Hecke eigenvalue system attached to a mod $\ell$ Bianchi modular form, after increasing the level, would be attached, up to twist, to another form with trivial weight. 

 In this paper, we prove that what Figueiredo assumed is true following the technique used by Ash and Stevens in \cite{as2}. Our main corollary is as follows

 \begin{corollary}
 Let $K$ be an imaginary quadratic field of class number one and $\O$ be its ring of integers. Let $\a$ be an ideal of $\O$ that is prime to the ideal $(\ell)$
 where $\ell$ is a rational prime that is split in $\O$. Let $\Phi$ be a Hecke eigenvalue system occuring in $H^1(\Gamma_1(\a),E)$ where
 $E$ is a finite dimensional $\F_{\ell}[\text{GL}_2(\O / (\ell))]$-module. Then $\Phi$ occurs in $H^1(\Gamma_1(\a \ell),\F_{\ell})$, up to twist.
 \end{corollary}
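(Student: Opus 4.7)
The plan is to follow the Ash--Stevens weight-reduction strategy of \cite{as2} step by step, using the splitting of $\ell$ in $\O$ to keep the representation theory elementary.

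First, I would reduce to the case where $E$ is irreducible. A Jordan--H\"older filtration of $E$ produces short exact sequences whose long exact cohomology sequences show that $\Phi$ already occurs in $H^1(\Gamma_1(\a),V)$ for some irreducible constituent $V$. Second, I would identify $V$ explicitly. Since $\ell$ splits, $\O/(\ell)\cong\F_\ell\times\F_\ell$ and $\text{GL}_2(\O/(\ell))\cong\text{GL}_2(\F_\ell)\times\text{GL}_2(\F_\ell)$, so the irreducible $\overline{\F}_\ell$-representations of this group are external tensor products of representations of the form $\text{Sym}^{a}(\overline{\F}_\ell^{\,2})\otimes\det^{b}$ with $0\le a\le\ell-1$. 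Absorbing the two determinant characters into a preliminary twist of $\Phi$, I may assume $V=\text{Sym}^{a_1}(\overline{\F}_\ell^{\,2})\boxtimes\text{Sym}^{a_2}(\overline{\F}_\ell^{\,2})$. The highest-weight line of $V$ is stable under the upper-triangular Borel $B=B(\F_\ell)\times B(\F_\ell)$, which acts on it through a character $\chi$, so Frobenius reciprocity yields a Hecke-equivariant injection
$$V\hookrightarrow \text{Ind}_B^G\chi,\qquad G:=\text{GL}_2(\O/(\ell)).$$

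Third, the coprimality $(\a,(\ell))=1$ together with strong approximation ensures that the reduction map $\Gamma_1(\a)\twoheadrightarrow\text{SL}_2(\O/(\ell))$ is surjective; the preimage of $B\cap\text{SL}_2$ is the Iwahori-type group $\Gamma_0(\a,\ell):=\Gamma_1(\a)\cap\Gamma_0((\ell))$. Shapiro's lemma then provides a Hecke-equivariant isomorphism
$$H^1\!\big(\Gamma_1(\a),\text{Ind}_B^G\chi\big)\;\cong\; H^1\!\big(\Gamma_0(\a,\ell),\chi\big).$$
Fourth, $\Gamma_1(\a\ell)$ is normal in $\Gamma_0(\a,\ell)$ with finite quotient $Q\cong(\F_\ell^\times)^2$, of order prime to $\ell$, through which $\chi$ factors. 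Because $|Q|$ is invertible in $\F_\ell$, the higher $Q$-cohomology of $\chi$ vanishes and the inflation--restriction sequence collapses to
$$H^1\!\big(\Gamma_0(\a,\ell),\chi\big)\;\cong\; H^1\!\big(\Gamma_1(\a\ell),\F_\ell\big)[\chi],$$
the $\chi$-isotypic subspace for the diamond-operator action of $Q$. Consequently $\Phi$, twisted only by the combined determinant and $\chi$ characters accumulated in the process, appears in $H^1(\Gamma_1(\a\ell),\F_\ell)$, as desired.

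The main obstacle I foresee is Hecke-compatibility bookkeeping: each reduction (Jordan--H\"older, Shapiro, inflation--restriction) is cohomologically natural, but one must verify that the Hecke correspondences away from $\a\ell$ intertwine with the algebraic maps, and must identify precisely the cumulative character twist acquired at each stage. The splitting hypothesis on $\ell$ is structurally essential: it is exactly what guarantees that every irreducible $\overline{\F}_\ell$-representation of $\text{GL}_2(\O/(\ell))$ is a principal series over $\F_\ell$, fitting into an induced module with a one-dimensional coefficient. If $\ell$ were inert, the residue field $\O/(\ell)\cong\F_{\ell^2}$ would support cuspidal irreducibles induced from a non-split torus, which do not embed into any $\F_\ell$-principal series, and the argument would break down at the highest-weight step.
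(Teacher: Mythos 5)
Your high-level strategy is the same one the paper follows: reduce to an irreducible constituent, identify it as an external tensor product of twisted symmetric powers, embed that module into a principal series, and pass from the induced module to trivial coefficients at deeper level via Shapiro's lemma and the order-prime-to-$\ell$ diamond group. The splitting hypothesis plays exactly the role you describe. However, there is a genuine gap in the middle of your argument, and it is precisely the gap that the bulk of the paper (Sections 5 and 6) is devoted to closing.

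The problem is the step ``$V\hookrightarrow \text{Ind}_B^G\chi$, therefore $\Phi$ occurs in $H^1(\Gamma,\text{Ind}_B^G\chi)$.'' An injection of $\Gamma$-modules $V\hookrightarrow I$ does \emph{not} in general induce an injection $H^1(\Gamma,V)\to H^1(\Gamma,I)$; the long exact sequence attached to $0\to V\to I\to W\to 0$ shows that the kernel is the image of the connecting map $\delta\colon H^0(\Gamma,W)\to H^1(\Gamma,V)$. So the eigenvector realizing $\Phi$ might die in $H^1(\Gamma,I)$. Your proposal silently assumes it does not, and never examines $W=I/V$. This is exactly where the paper does real work: it factors the four-term sequence $0\to E_{r,s}\to I_{r,s}\to U_{r,s}\to V_{r,s}\to 0$ into two short exact sequences (Section 5), then computes $H^0(\Gamma,-)$ of $E_{r,s}$, $I_{r,s}$, $U_{r,s}$, and $W_{r,s}$ by hand (Section 6, using the Dickson invariants for the boundary case $(r,s)=(\ell-1,\ell-1)$), and finally shows in Theorem 7.1 that the map $H^0(\Gamma,I_{r,s})\to H^0(\Gamma,W_{r,s})$ is surjective in every case, so $\delta=0$ and $H^1(\Gamma,E_{r,s})\to H^1(\Gamma,I_{r,s})$ is injective. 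Without some replacement for this computation -- for instance, an argument that the eigensystems supported on $H^0(\Gamma,W_{r,s})$ are already visible in $H^1(\Gamma_1(\a\ell),\F_\ell)$ -- your proof does not go through.

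Two smaller remarks. First, Frobenius reciprocity $\text{Hom}_G(V,\text{Ind}_B^G\chi)\cong\text{Hom}_B(\text{Res}_B V,\chi)$ requires a one-dimensional $B$-\emph{quotient} of $V$, not a $B$-stable submodule, so it is the lowest-weight line (or, dually, the highest-weight line of $V^\vee$) that you should use; the embedding exists, but the phrasing ``highest-weight line is $B$-stable'' invokes the wrong adjunction. Second, your preliminary twist to absorb determinants should be tracked carefully as a twist of Hecke modules rather than of $\Gamma$-modules (as an $\text{SL}_2(\O)$-module the determinant twist is invisible); the paper makes this precise in Lemma 3.1 via the $(a,b)$-twist notation, which you would need to reproduce to justify ``up to twist'' in the conclusion.
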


 As an immediate corollary of the above, we get 
 
 \begin{corollary}
 Mod $\ell$, there are only finitely many eigenvalue systems with fixed level.
 \end{corollary}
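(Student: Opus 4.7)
The plan is to deduce this directly from Corollary 1.1 together with a dimension count. Fix the level $\a$, coprime to $(\ell)$. By the definition given in the introduction, any mod $\ell$ Bianchi eigenvalue system of level $\a$ arises as a Hecke eigensystem on some $H^1(\Gamma_1(\a),E)$ where $E$ is a finite-dimensional $\F_\ell[\text{GL}_2(\O/(\ell))]$-module; Corollary 1.1 then asserts that every such eigensystem occurs, up to twist, as a Hecke eigensystem on $H^1(\Gamma_1(\a\ell),\F_\ell)$.

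Next I would note that the target space $H^1(\Gamma_1(\a\ell),\F_\ell)$ is a finite-dimensional $\F_\ell$-vector space. Indeed, the locally symmetric space $\Gamma_1(\a\ell)\backslash \mathbb{H}^3$ admits a Borel-Serre compactification that is a compact manifold with corners, hence has the homotopy type of a finite CW-complex, so its cohomology with $\F_\ell$ coefficients is finitely generated. Consequently the commutative Hecke algebra acts on a finite-dimensional $\F_\ell$-space, and only finitely many distinct systems of Hecke eigenvalues occur on it.

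The last ingredient is that the collection of admissible twists is finite. Inspecting the statement of Corollary 1.1, the twisting characters factor through a finite quotient (characters of $(\O/(\ell))^\ast$, or a closely related finite group, taking values in $\overline{\F}_\ell^\ast$), and hence form a finite set. Combining the three observations — finitely many eigensystems in $H^1(\Gamma_1(\a\ell),\F_\ell)$, finitely many twists, and the reduction provided by Corollary 1.1 — yields only finitely many mod $\ell$ eigenvalue systems for the fixed level $\a$.

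The main obstacle is essentially already resolved by Corollary 1.1; the only residual subtlety is bookkeeping the twist group carefully, so as to be sure that the untwisting step does not inflate a finite set into an infinite one. Once that is verified, the corollary follows as a formality.
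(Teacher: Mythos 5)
Your argument is correct and is essentially the paper's own (implicit) proof: the authors state this as an ``immediate corollary'' of Corollary~1.1, and the intended reasoning is exactly what you spell out — every eigensystem at level $\a$ occurs up to twist in the single finite-dimensional $\F_\ell$-space $H^1(\Gamma_1(\a\ell),\F_\ell)$, a commutative algebra acting on a finite-dimensional space supports only finitely many eigensystems, and the twist group (determinant twists indexed by $0\le a,b\le \ell-2$, i.e.\ characters of $(\O/(\ell))^\ast$) is finite.
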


 Note that due to the possible existence of torsion in the second cohomology with integral coefficients, we cannot in general lift mod $\ell$ forms to characteristic 0. So this result does not immediately imply mod $\ell$ congruences between Bianchi modular forms.
 \vspace{.1 in}
 
 \noindent \textit{Acknowledgements} We thank Gebhard Boeckle and Gabor Wiese for the very helpful comments and discussions. The first author was partially supported by the SFB/TR 45 `Periods, Moduli Spaces and Arithmetic of Algebraic Varieties' of the German Research Foundation (DFG).

 \vspace{.1 in}
 \noindent \textit{Notation} Once and for all, fix a quadratic imaginary field $K$ of class number one and an ideal $\a$ of $\O=\O_K$. Also fix  a rational prime $\ell$ that is coprime to $\a$ and splits in $\O$  as $\ell = \lambda \bar{\lambda}$ . Let $\b$ be an arbitrary ideal.
We use the following notation: 
\[
\begin{tabular}{rl} 
  
 M$_2(\O):$ & matrices in GL$_2(K)$ with entries in $\O$\\

$ \Gamma_0(\b):$ & $\Bigl \lbrace \bigl ( \begin{smallmatrix} a & b  \\ c & d \\ \end{smallmatrix} \bigr ) \in \text{SL}_2(\O) : c \equiv 0 \mod \b \Bigr \rbrace $ \\
 
$ \Gamma_1(\b):$ & $\Bigl \lbrace \bigl ( \begin{smallmatrix} a & b  \\ c & d \\ \end{smallmatrix} \bigr ) \in \text{SL}_2(\O) : c \equiv d-1 \equiv 0 \mod \a \Bigr \rbrace $\\

$\Delta:$ & $\Bigl \{ \bigl ( \begin{smallmatrix} a & b  \\ c & d \\ \end{smallmatrix} \bigr ) \in \text{M}_2(\O) : c \equiv 0 \mod \a \Bigr \}$ \\

$\Delta(\b):$ & $\Bigl \{ \bigl ( \begin{smallmatrix} a & b  \\ c & d \\ \end{smallmatrix} \bigr ) \in \text{M}_2(\O): c \equiv 0 \mod \a\b \Bigr \}$ \\

$P(\b):$ & $\Bigl \{ g \in \Gamma_1(\a) : g \equiv \bigl ( \begin{smallmatrix} 1 & 0  \\ 0 & 1 \\ \end{smallmatrix} \bigr )  \mod \b \Bigr \}$ \\

$\Gamma:$ & $\Gamma_1(\a)$ \\

$\Gamma(\b):$ & $\Gamma_1(\a\cdot \b)$ \\

$\Gamma^0(\b):$ & $\Gamma_1(\a) \cap \Gamma_0(\b)$ \\

 \end{tabular}\]


\section{Hecke Operators on Cohomology}

 In this section, we describe the Hecke operators on the cohomology. Let $R$ be a ring and $\tilde{\alpha}=\bigl( \begin{smallmatrix} \alpha & 0  \\ 0 & 1 \\ \end{smallmatrix} \bigr)$ where $\alpha$ is a prime element of $\O$. We follow the standart notations and put $\Gamma_{\alpha} := \Gamma \cap \tilde{\alpha}^{-1}\Gamma \tilde{\alpha}$ and $\Gamma^{\alpha}:=\Gamma \cap \tilde{\alpha} \Gamma \tilde{\alpha}^{-1}$. 
 
 Let $V$ be a right $R[\text{M}_2(\O)]$-module. We define the Hecke operator $T_{\alpha}$ on the cohomology as the composition $$\xymatrix{ H^1(\Gamma,V) \ar[d]^{res} & H^1(\Gamma,V) \\ H^1(\Gamma_{\alpha} ,V) \ar[r]^{\hat{\alpha}} & H^1(\Gamma^{\alpha} ,V) \ar[u]^{cores} & }$$ where the map $\hat{\alpha}$ is defined by $$ c \mapsto (g \mapsto  c(\alpha^{-1} g \alpha)\cdot \alpha^{\iota}) $$ where $c$ is a cocycle in $H^1(\Gamma_\alpha ,V)$ and $\alpha^{\iota}= \text{det}(\alpha)\alpha^{-1}$.

 One can describe Hecke operators $T_\alpha$ explicitly: suppose $\Gamma \alpha \Gamma = \bigsqcup_{_{i=1}}^{^m} \gamma_i \Gamma $. Given  $g \in \Gamma$ and $\gamma_i$, there is a unique $\gamma_{j(i)}$ such that $\gamma_{j(i)}^{-1} g \gamma_i \in \Gamma $. Then $$(T_\alpha c)(g) = \sum_{1\leq i \leq m}  c( \gamma_{j(i)}^{-1} g \gamma_i )\cdot \gamma_i^{\iota} $$ for all cocycles $c$ in $H^1(\Gamma,V)$ and $g \in \Gamma$. We note that this formula agrees with the one given in \cite[p.194]{as1}. 

 We define the \textit{Hecke algebra} $\mathbb{H}$ as the subalgebra of the endomorphisms algebra of $H^1(\Gamma,V)$ that is generated by the $T_{\pi}$'s where $\pi$ is a prime. Note that $\mathbb{H}$ is a commutative algebra.

 The induced module $Ind(V)=Ind(\Gamma, \Gamma(\b),V)$ is the set of $\Gamma(\b)$-invariant maps from $\Gamma$ to $V$, that is $$ Ind(V)= \{ f: \Gamma \rightarrow V \ | \ f(gh)= f(g)\cdot h \text{ for all } h \in \Gamma(\b) \}.$$ Then $Ind(V)$ is a right $\Gamma$-module with the action $(f \cdot y)(x)=f(yx)$ for $x,y \in \Gamma$ and $f \in Ind(V)$.

 We can extend the $\Gamma$-action on $Ind(V)$ to a right $\Delta$-action in the following way. Let $\alpha \in \Delta$ and $f \in Ind(V)$ and $x \in \Gamma$, then there are $\beta \in \Delta(\b)$ and $y \in \Gamma$ such that $\alpha x =y  \beta $. We define $$( f \cdot \alpha )(x)=f(y)\cdot \beta .$$
 
 A key tool is Shapiro's lemma:
 
\begin{proposition}
 There is an isomorphism $$ \theta : H^1(\Gamma,Ind(V)) \rightarrow H^1(\Gamma(\b),V)$$ given by $f \mapsto f(I)$ for every cocycle $f$ in $H^1(\Gamma,Ind(V))$ where $I$ denotes the identity matrix. Moreover, the Hecke operators commute with the Shapiro map $\theta $.
\end{proposition}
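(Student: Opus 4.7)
The plan is to first establish Shapiro's isomorphism by exhibiting inverses at the cocycle level, then verify Hecke compatibility by decomposing the operator and checking each piece separately.

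The Shapiro isomorphism arises from the evaluation map $\epsilon: Ind(V) \to V$, $f \mapsto f(I)$, which is $\Gamma(\b)$-equivariant: for any $h \in \Gamma(\b)$ the defining invariance $f(gh) = f(g)\cdot h$ forces $\epsilon(f\cdot h) = (f \cdot h)(I) = f(h) = f(I)\cdot h = \epsilon(f)\cdot h$. Composing the restriction $H^1(\Gamma, Ind(V)) \to H^1(\Gamma(\b), Ind(V))$ with $\epsilon_*$ gives $\theta$, and on a 1-cocycle $f$ it returns the cocycle $h \mapsto f(h)(I)$, which is the map described in the statement. To show $\theta$ is an isomorphism, I would construct an inverse by the standard recipe: fix left coset representatives $\{r_i\}$ for $\Gamma(\b)\backslash\Gamma$ with $r_1 = I$; given $c \in Z^1(\Gamma(\b), V)$, define $\tilde c : \Gamma \to Ind(V)$ by setting $\tilde c(g)(r_i) = c(h_i)$, where $h_i \in \Gamma(\b)$ is determined by $r_i g = h_i r_{\sigma(i)}$, and then extending $\tilde c(g)$ to all of $\Gamma$ by the required right-$\Gamma(\b)$-invariance. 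Routine verification shows $\tilde c$ is a 1-cocycle and that $\theta$ and $\tilde{(\cdot)}$ induce mutually inverse maps on cohomology.

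For Hecke compatibility, recall that on both sides $T_\alpha = cores \circ \hat\alpha \circ res$. Naturality of restriction and corestriction with respect to the $\Gamma(\b)$-equivariant map $\epsilon$ (applied to the intermediate subgroups $\Gamma_\alpha$ and $\Gamma^\alpha$, for which Shapiro's lemma also holds) handles the outer two pieces. The crucial step is that the twist $\hat\alpha$ is compatible with $\theta$, which boils down to the fact that evaluation at $I$ intertwines the extended $\Delta$-action on $Ind(V)$ with the original $\Delta$-action on $V$. Indeed, by definition $(f \cdot \alpha)(I) = f(y)\cdot\beta$ where $\alpha = y \beta$ with $y \in \Gamma, \beta \in \Delta(\b)$, and this matches the formula for $\hat\alpha$ downstairs once one passes to a cocycle representative and unwinds the $\Gamma(\b)$-invariance of $f$.

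The main obstacle will be the combinatorial bookkeeping: verifying that the double-coset decomposition $\Gamma \alpha \Gamma = \bigsqcup_i \gamma_i \Gamma$ defining $T_\alpha$ upstairs interleaves correctly with the decomposition $\Gamma(\b) \alpha \Gamma(\b) = \bigsqcup_j \delta_j \Gamma(\b)$ downstairs, so that the extra indexing contributed by $\Gamma(\b)\backslash\Gamma$-coset representatives is absorbed exactly by the values of the induced function at the non-identity representatives. Once this correspondence between the two sets of representatives is made explicit, the two Hecke formulas match term-by-term upon evaluation at $I$, completing the proof.
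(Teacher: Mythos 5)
The paper itself gives no argument for this proposition: it simply refers to Ash--Stevens \cite{as1} for the result in a more general setting and to Wiese \cite{wi} for a proof over $\mathrm{PSL}_2(\Z)$ with the same normalization of the Hecke operator. So there is no in-paper proof to compare against; the comparison has to be with the cited literature, and your outline does follow the same standard route. You correctly identify the evaluation map $\epsilon$ and correctly isolate the intertwining $(f\cdot\alpha)(I)=f(y)\cdot\beta$ as the crux of Hecke compatibility. Two concrete problems remain.

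First, a coset-convention slip in the inverse construction. By the defining relation $f(gh)=f(g)\cdot h$ for $h\in\Gamma(\b)$, an element of $Ind(V)$ is determined by its values on a set of representatives of $\Gamma/\Gamma(\b)$, i.e.\ on the cosets $g\Gamma(\b)$. Your formula $r_i g=h_i r_{\sigma(i)}$ with $h_i\in\Gamma(\b)$, on the other hand, presumes $\Gamma=\bigsqcup_i \Gamma(\b) r_i$, so the $r_i$ run over $\Gamma(\b)\backslash\Gamma$. These are not the same system of representatives in general, so $\tilde c(g)$ is not actually specified on $\Gamma/\Gamma(\b)$, and the phrase ``extending by right-$\Gamma(\b)$-invariance'' does not repair the mismatch. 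One must pick a single quotient (e.g.\ $\Gamma/\Gamma(\b)$, writing $g r_j = r_{\tau(j)} h_j$) and rephrase the formula accordingly before the inverse cocycle is well-defined.

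Second, the Hecke step is a plan rather than a proof. You dispose of $res$ and $cores$ by invoking naturality ``for which Shapiro's lemma also holds'' at $\Gamma_\alpha$ and $\Gamma^\alpha$, and defer $\hat\alpha$ to routine unwinding; but these unverified assertions are precisely the content of the statement. One needs that $Ind(\Gamma,\Gamma(\b),V)$ restricted to $\Gamma_\alpha$ is again the induced module $Ind(\Gamma_\alpha,\Gamma_\alpha\cap\Gamma(\b),V)$, which amounts to $\Gamma=\Gamma_\alpha\,\Gamma(\b)$ so that the relevant indices agree, and one needs the single-coset decomposition of $\Gamma\tilde\alpha\Gamma$ to refine that of $\Gamma(\b)\tilde\alpha\Gamma(\b)$ compatibly with the $\Gamma/\Gamma(\b)$ cosets. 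These are exactly the points where \cite{as1} and \cite{wi} do their actual work; acknowledging them as ``bookkeeping'' is not the same as carrying them out, so the proposal is an honest sketch but not yet a proof.
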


The fact that the Hecke operators commute with the Shapiro isomorphism $\theta $ was proved in a more general setting in \cite{as1}. See also \cite{wi} for a proof in the case of PSL$_2(\Z)$ using the same construction as ours for the Hecke operators.

A \textit{system of eigenvalues} of $\mathbb{H}$ with values in a ring $R$ is a ring homomorphism $\Phi : \mathbb{H} \rightarrow R$. We say that
an eigenvalue system $\Phi$ occurs in the $R\mathbb{H}$-module $A$ if there is a nonzero element $a \in A$ such that $Ta=\Phi(T)a$ for all $T$ in $\mathbb{H}$.

The following lemma is proved in \cite[Lemma 2.1]{as1}.
\begin{lemma}\label{lemma-irred}
 Let $F$ be a field and $V$ be a $F\Delta$-module which is finite dimensional over $F$. If an eigenvalue system $\Phi : \mathbb{H} \rightarrow F$ occurs in $H^n(\Gamma,V)$, then $\Phi$ occurs in $H^n(\Gamma,W)$ for some irreducible 
$F\Delta$-subquotient $W$ of $V$.

\end{lemma}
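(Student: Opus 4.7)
The plan is to induct on the $F\Delta$-composition length of $V$. If $V$ is irreducible, take $W=V$. Otherwise pick a proper nonzero $F\Delta$-submodule $V_1\subsetneq V$; both $V_1$ and $V/V_1$ have strictly smaller length. The short exact sequence $0\to V_1\to V\to V/V_1\to 0$ yields a long exact sequence of $\Gamma$-cohomology
$$\cdots\to H^n(\Gamma,V_1)\xrightarrow{i_*} H^n(\Gamma,V)\xrightarrow{p_*} H^n(\Gamma,V/V_1)\to\cdots,$$
whose maps $i_*,p_*$ are Hecke-equivariant by the naturality in the coefficients of the explicit coset-sum formula $(T_\alpha c)(g)=\sum_i c(\gamma_{j(i)}^{-1}g\gamma_i)\cdot\gamma_i^\iota$ recalled in Section~2.

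Fix a $\Phi$-eigenvector $x\in H^n(\Gamma,V)$. If $p_*(x)\neq 0$, then $p_*(x)$ is a $\Phi$-eigenvector in $H^n(\Gamma,V/V_1)$, and the inductive hypothesis applied to $V/V_1$ produces an irreducible $F\Delta$-subquotient $W$ of $V/V_1$ (hence of $V$) in which $\Phi$ occurs. If $p_*(x)=0$, write $x=i_*(y)$ for some $y\in H^n(\Gamma,V_1)$; since $i_*$ need not be injective, $y$ need not be an eigenvector, so the task reduces to producing a $\Phi$-eigenvector somewhere inside $H^n(\Gamma,V_1)$. For this, I form the Hecke-submodule $N:=i_*^{-1}(F\cdot x)\subseteq H^n(\Gamma,V_1)$, which sits in a Hecke-equivariant short exact sequence
$$0\to\ker(i_*)\to N\to F\cdot x\to 0$$
in which $\Phi$ trivially occurs on the one-dimensional quotient.

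The main obstacle is the lifting step: deducing from the occurrence of $\Phi$ in the quotient $F\cdot x$ that $\Phi$ occurs in $N$ itself. My approach would be an Artinian-ring argument. Since $V_1$ is finite-dimensional over $F$ and $\Gamma$ is a finitely-presented arithmetic group, $H^n(\Gamma,V_1)$ and hence $N$ are finite-dimensional over $F$. The image $\overline{\mathbb{H}}$ of $\mathbb{H}$ in $\mathrm{End}_F(N)$ is then a commutative Artinian $F$-algebra and decomposes as a product $\overline{\mathbb{H}}\cong\prod_i R_i$ of local Artinian rings; the character $\Phi$ factors through exactly one factor $R_j$ via a primitive idempotent $e_j\in\overline{\mathbb{H}}$. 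Applying the exact functor $M\mapsto e_jM$ to the sequence above and noting that $e_j(F\cdot x)=F\cdot x\neq 0$ forces $e_jN\neq 0$, and nilpotence of the maximal ideal $\mathfrak{m}_j\subseteq R_j$ then yields a nonzero element of $e_jN$ killed by $\mathfrak{m}_j$, which is precisely a $\Phi$-eigenvector in $N\subseteq H^n(\Gamma,V_1)$. The inductive hypothesis applied to $V_1$ then completes the argument.
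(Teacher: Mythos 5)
Your proof is correct. The paper itself does not prove this lemma --- it cites Ash--Stevens, \cite[Lemma 2.1]{as1} --- but your argument reconstructs the standard proof faithfully: induct on composition length, use the Hecke-equivariant long exact sequence, and handle the only nontrivial case (an eigenvector in the image of $i_*$ whose preimage need not be an eigenvector) by passing to the image $\overline{\mathbb{H}}$ of $\mathbb{H}$ in a finite-dimensional endomorphism algebra, decomposing that commutative Artinian ring into local factors, and using nilpotence of the maximal ideal of the relevant factor to produce an honest $\Phi$-eigenvector. All the small verifications you invoke check out: $\Phi$ does factor through $\overline{\mathbb{H}}$ because an operator annihilating $N$ annihilates the quotient $F\cdot x$ and hence has $\Phi$-value $0$; $e_j(F\cdot x)=F\cdot x$ because $\overline{\Phi}(e_j)=1$; and any $z\in e_jN$ killed by $\mathfrak m_j$ satisfies $Tz=\Phi(T)z$ since $e_j\overline T-\overline\Phi(T)e_j\in\mathfrak m_j$.

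One small streamlining: forming $N=i_*^{-1}(F\cdot x)$ is not necessary. If $T\in\mathbb{H}$ acts as zero on all of $H^n(\Gamma,V_1)$, then $0=i_*(Ty)=T\,i_*(y)=Tx=\Phi(T)x$, so $\Phi(T)=0$; hence $\Phi$ already factors through the image of $\mathbb{H}$ in $\operatorname{End}_F\bigl(H^n(\Gamma,V_1)\bigr)$, and you can run the idempotent-and-nilpotence argument directly on $H^n(\Gamma,V_1)$ without restricting to $N$. You should also record explicitly the standard fact you are relying on for finite-dimensionality: $\Gamma$ has a torsion-free finite-index subgroup with finite classifying space (Borel--Serre / Selberg), and the Lyndon--Hochschild--Serre spectral sequence then gives $\dim_F H^n(\Gamma,V_1)<\infty$ for any finite-dimensional $V_1$.
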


Thus it is enough to investigate the cohomology with irreducible coefficient modules if we are only interested in the eigenvalue systems. In the next two sections, we discuss the irreducible $\F_{\ell}[\text{GL}_2(\O / (\ell))]$-modules.

\section{The Irreducible Modules } \label{irreducible}
 For a nonnegative integer $k$, we are interested in the right representation $\tilde{E}_k$ of $\textbf{GL}_2$ on $Sym^k(\textbf{A}^2)$ where $\textbf{A}^2$ is the affine plane. Another model of this representation is given as follows. Given a commutative a ring $R$, we have $E_k(R) \simeq R[x,y]_{k}$ where the latter is the space of homogeneous degree $k$ polynomials in two variables over $R$. Note that $\lbrace X^{k-i}Y^i : 0 \leq i \leq k \rbrace $ is an $R$-basis of $E_k(R)$. 

For a polynomial $P(X,Y)$ in $E_k(\O)$ and a matrix $\bigl( \begin{smallmatrix} a & b  \\ c & d \\ \end{smallmatrix} \bigr)$ in $\text{M}_2(\O)$, the above mentioned representation is defined as
 $$  \bigl ( P \cdot  \bigl( \begin{smallmatrix} a & b  \\ c & d \\ \end{smallmatrix} \bigr)\bigr ) \bigl( X,Y \bigr )
 = P \bigl ( \bigl( \begin{smallmatrix} a & b  \\ c & d \\ \end{smallmatrix} \bigr) \bigl( \begin{smallmatrix} X  \\ Y \\ \end{smallmatrix} \bigr)  \bigr )= P \bigl( aX+bY,cX+dY \bigr ).$$
 
 The quotients rings $\O/ \lambda$ and $\O / \bar{\lambda}$ are canonically isomorphic to $\F_{\ell}$. Then $\text{M}_2(\O)$ acts on $E_k(\F_{\ell})$ in two different ways: through reduction by $\lambda$ and by $\bar{\lambda}$. 

In this note, we are interested in the absolutely irreducible representations of $\text{GL}_2(\O / (\ell))$ over $\F_{\ell}$. Given nonnegative $a,r$, put
$$E^a_r(\F_{\ell}):= \text{det}^a \otimes_{\F_{\ell}} E_r(\F_{\ell}) $$

\noindent It follows from a result of Brauer and Nesbitt \cite{bn} that the absolutely irreducible representations of $\text{GL}_2(\O / (\ell))=\text{GL}_2(\O / \lambda) \times \text{GL}_2(\O / \bar{\lambda})$ over $\F_{\ell}$ are
$$E^{a,b}_{r,s}(\F_{\ell}):= E^a_r(\F_{\ell}) \otimes_{\F_{\ell}} E^b_s(\F_{\ell})   \ \ \ , \ \ \ 0 \leq r,s \leq \ell-1, \ \ 0 \leq a,b \leq l-2$$

These are $\text{M}_2(\O)$ modules as well: $\text{M}_2(\O)$ acts on the first module through reduction by $\lambda$ and on the
second through reduction by $\bar{\lambda}$. 
For the rest of the paper, we will work over $\F_{\ell}$. So we simply write $E^{a,b}_{r,s}$. Moreover, we write $E_{r,s}$ when $a=b=0$.

\vspace{.1 in}
Let $E$ be a $\F_{\ell}[\text{M}_2(\O)]$-module. Given $0 \leq a,b \leq l-2$, we mean by $$H^*(\Gamma,E)^{(a,b)}$$
the cohomology group $H^*(\Gamma,E)$ twisted as a Hecke module. More precisely, let $v$ be an element of $H^*(\Gamma,E)$. Denote it as $v'$ when viewed as an element of $H^*(\Gamma,E)^{(a,b)}$. Let $\tau_1,\tau_2$ be the reduction maps from $\O$ to $\F_{\ell}$ by $\lambda$ and $\bar{\lambda}$ respectively. Given a Hecke operator $T_{\pi}$, we have
$$T_{\pi}(v')=\tau_1(\pi)^a\tau_2(\pi)^b T_{\pi}(v)$$ 

As SL$_2(\O)$-modules $E_{r,s}^{a,b}$ is the same as $E_{r,s}$. The difference occurs when they are considered as Hecke modules. The following observation is immediate.
\begin{lemma}\label{lemma-det}  We have

$$H^*(\Gamma,E^{a,b}_{r,s}) \simeq H^*(\Gamma,E_{r,s})^{(a,b)}$$
as Hecke modules.
\end{lemma}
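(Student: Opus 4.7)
The plan is to exploit the fact that $\Gamma \subseteq \mathrm{SL}_2(\mathcal{O})$ so the determinant twists are invisible at the level of the $\F_\ell[\Gamma]$-module structure, and then to track exactly how those twists resurface when one applies a Hecke operator. Concretely, since $\det \gamma = 1$ for every $\gamma \in \Gamma$, the identity map on underlying $\F_\ell$-vector spaces is a $\Gamma$-equivariant isomorphism $E^{a,b}_{r,s} \xrightarrow{\sim} E_{r,s}$, because the extra tensor factor $\det{}^a \otimes \det{}^b$ (on which $\mathrm{M}_2(\mathcal{O})$ acts via the two reductions $\tau_1,\tau_2$) acts trivially on $\Gamma$. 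This immediately gives a canonical isomorphism of abelian groups
$$H^*(\Gamma, E^{a,b}_{r,s}) \xrightarrow{\sim} H^*(\Gamma, E_{r,s}),$$
so the content of the lemma is entirely in comparing the Hecke actions.

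Next I would fix a prime element $\pi$ of $\mathcal{O}$, set $\tilde{\pi}=\bigl(\begin{smallmatrix}\pi&0\\0&1\end{smallmatrix}\bigr)$, and choose coset representatives $\Gamma\tilde{\pi}\Gamma = \bigsqcup_i \gamma_i\Gamma$. Every $\gamma_i$ lies in $\Gamma\tilde{\pi}\Gamma$, and since $\Gamma\subseteq\mathrm{SL}_2(\mathcal{O})$ this forces $\det(\gamma_i)=\det(\tilde{\pi})=\pi$, hence also $\det(\gamma_i^{\iota})=\pi$. Now I would plug this into the explicit formula
$$(T_\pi c)(g) = \sum_i c(\gamma_{j(i)}^{-1} g\, \gamma_i)\cdot \gamma_i^{\iota}$$
and observe that the only place the module structure on $E^{a,b}_{r,s}$ (as opposed to $E_{r,s}$) enters is in the right action of $\gamma_i^{\iota}$. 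Under the identification of the first paragraph, the action of $\gamma_i^{\iota}$ on $E^{a,b}_{r,s}$ is exactly its action on $E_{r,s}$ multiplied by the scalar $\tau_1(\det\gamma_i^{\iota})^a\,\tau_2(\det\gamma_i^{\iota})^b = \tau_1(\pi)^a\tau_2(\pi)^b$, which pulls out of the sum.

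Thus $T_\pi$ acting on $H^*(\Gamma,E^{a,b}_{r,s})$ equals $\tau_1(\pi)^a\tau_2(\pi)^b\cdot T_\pi$ acting on $H^*(\Gamma,E_{r,s})$, which is the very definition of the twist $H^*(\Gamma,E_{r,s})^{(a,b)}$. There is no real obstacle — the only thing to be careful about is the computation $\det(\gamma_i)=\det(\gamma_i^{\iota})=\pi$, which uses that the $\gamma_i$'s represent the double coset of a determinant-$\pi$ element and that $\Gamma$ has trivial determinant. Once that is in hand, the lemma reduces to the scalar pulling out of the Hecke sum.
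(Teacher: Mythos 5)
Your argument is correct and fills in exactly what the paper declares ``immediate'': as $\Gamma$-modules the twist is invisible (every element of $\Gamma$ has determinant $1$), and the extra factor $\tau_1(\det\gamma_i^\iota)^a\tau_2(\det\gamma_i^\iota)^b = \tau_1(\pi)^a\tau_2(\pi)^b$ produced by the right action of $\gamma_i^\iota$ in the explicit Hecke formula is precisely the scalar defining the twist $(\,\cdot\,)^{(a,b)}$. The small computation $\det(\gamma_i^\iota)=\det(\gamma_i)=\pi$ that you single out is indeed the only thing one needs to check, so this matches the intended (unwritten) proof.
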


\section{Induced Modules}

As we announced in the introduction we want to go down to trivial weight by increasing the level by $\ell$. Thus we are interested in the 
Hecke module $H^1(\Gamma(\ell),\F_{\ell})$. We investigate these in this section.

Let $\chi: \Gamma^0(\ell) / \Gamma(\ell) \rightarrow \F_{\ell}^*$ be a homomorphism. For any $\F_{\ell}[\Delta]$-module $E$, we define
$H^*(\Gamma(\ell),\chi,E)$ as the submodule of all $v \in H^*(\Gamma(\ell),E)$ such that $v\cdot \bigl( \begin{smallmatrix} a & b  \\ c & d \\ \end{smallmatrix} \bigr) = v \cdot \chi(d)$ for every $\bigl( \begin{smallmatrix} a & b  \\ c & d \\ \end{smallmatrix} \bigr) \in \Gamma^0(\ell)$.

We have

$$H^1(\Gamma(\ell),\F_{\ell}) \simeq \bigoplus_{\chi}H^1(\Gamma(\ell),\chi,\F_{\ell}) \simeq  \bigoplus_{\chi}H^1(\Gamma^0(\ell), (\F_{\ell})^{\chi})$$

\noindent where $(\F_{\ell})^{\chi}$ is the rank one $\F_{\ell}$-module on which $\Gamma^0(\ell)$ acts via $\chi$. The last isomorphism follows from Lemma 1.1.5 of \cite{as1}. Using Shapiro's lemma, we relate these to the cohomology of $\Gamma$.

$$H^1(\Gamma(\ell),\F_{\ell}) \simeq \bigoplus_{\chi}H^1(\Gamma,\text{Ind}(\Gamma^0(\ell),\Gamma, (\F_{\ell})^{\chi})).$$

We follow Ash and Stevens and use the following space of functions in order to study the module $\text{Ind}(\Gamma^0(\ell),\Gamma, (\F_{\ell})^{\chi})$. Let $I$ be the set of $\F_{\ell}$ valued functions on $\F_{\ell}^2$ which vanish at the origin. The semigroup $\Delta$ acts on $I$ both by reduction by $\lambda$ and by $\bar{\lambda}$. The action is given by $$(f \cdot M)(a,b) = f((a,b)M^t)$$ for $f \in I$, $(a,b) \in \F_{\ell}^2$ and $M \in \Delta$.
  
 For each integer $n$, let $I_n$ be the $\Delta$-submodule of $I$ consisting of homogeneous functions of degree $n$, that is, the collection of functions $f \in I$ such that $f((x a, x b))=x^n f((a,b))$. Observe that $I_k=I_{k+l-1}$. A function $f \in I_n$ is determined by its values on the set $\{ (1,0),...,(1,\ell-1),(0,1)\}$ , which can be identified with $\mathbb{P}^1(\F_{\ell})$. Thus every $I_n$ is $\ell+1$ dimensional. We have the decomposition $$I \simeq \bigoplus_{n=0}^{\ell-2} I_n.$$
 
Let $\chi_1 : (\O / \lambda)^* \rightarrow \F_{\ell}^*$ and $\chi_2 : (\O / \bar{\lambda})^* \rightarrow \F_{\ell}^*$ be the restrictions of the canonical isomorphisms to the units. We have the following isomorphisms of $\Delta$-modules
$$\text{Ind}(\Gamma^0(\lambda),\Gamma,(\F_{\ell})^{\chi_1^k}) \simeq I_k $$ 
and
$$\text{Ind}(\Gamma^0(\bar{\lambda}),\Gamma,(\F_{\ell})^{\chi_2^k}) \simeq I_k $$ 
for $0 \leq k \leq l-2$. Of course, in the first case $\Delta$ acts on $I_k$ via reduction through $\lambda$ and in the second case via reduction through $\bar{\lambda}$.

 As the quotient $\Gamma^0(\ell) / \Gamma(\ell)$ is isomorphic to $(\O/\ell)^* \simeq (\O / \lambda)^* \times (\O / \bar{\lambda})^*$, any homomorphism $\chi: \Gamma^0(\ell) / \Gamma(\ell) \rightarrow \F_{\ell}^*$ can be written uniquely as a product $\chi_1^r \cdot \chi_2^s$ for some $0 \leq r,s \leq l-1$. In this case, we denote $\chi$ as $\chi(r,s)$.

The following is a straightforward generalization of Lemma 2.6 of \cite{as2}.
\begin{lemma}\label{lemma-induced} Let $0 \leq r,s, \leq l-1$. Then

$$H^1(\Gamma, I_r \otimes_{\F_{\ell}} I_s) \simeq H^1(\Gamma(\ell), \chi(r,s),\F_{\ell})$$
as Hecke modules.

\end{lemma}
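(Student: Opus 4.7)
The plan is to combine three tools already at hand in the paper — the character eigenspace decomposition on $\Gamma(\ell)$, the passage from $H^1(\Gamma(\ell),\chi,\F_\ell)$ to $H^1(\Gamma^0(\ell),(\F_\ell)^\chi)$ via Ash-Stevens Lemma 1.1.5, and Shapiro's lemma (Proposition 2.1) — with one extra $\Delta$-module identification. The new ingredient I would prove is
$$\text{Ind}(\Gamma^0(\ell),\Gamma,(\F_\ell)^{\chi(r,s)}) \simeq I_r \otimes_{\F_\ell} I_s$$
as right $\Delta$-modules, where $\Delta$ acts on the first tensor factor via reduction mod $\lambda$ and on the second via reduction mod $\bar\lambda$. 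Once this is in hand, applying $H^1(\Gamma,-)$ to both sides and chaining the three other identifications identifies $H^1(\Gamma,I_r \otimes I_s)$ with $H^1(\Gamma(\ell),\chi(r,s),\F_\ell)$.

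For the $\Delta$-module identification I would argue via CRT. Since $\ell=\lambda\bar\lambda$ with $\lambda,\bar\lambda$ coprime, we have $\Gamma^0(\ell)=\Gamma^0(\lambda)\cap\Gamma^0(\bar\lambda)$ and $\Gamma^0(\ell)/\Gamma(\ell)\simeq (\O/\lambda)^*\times(\O/\bar\lambda)^*$, so the transformation rule $\chi(r,s)=\chi_1^r\cdot\chi_2^s$ splits cleanly into a $\lambda$-part and a $\bar\lambda$-part. A system of coset representatives for $\Gamma/\Gamma^0(\ell)$ is parameterised by $\P^1(\O/\ell)\simeq \P^1(\F_\ell)\times\P^1(\F_\ell)$ through the bottom-row map; the two pre-existing isomorphisms $\text{Ind}(\Gamma^0(\lambda),\Gamma,(\F_\ell)^{\chi_1^r})\simeq I_r$ and $\text{Ind}(\Gamma^0(\bar\lambda),\Gamma,(\F_\ell)^{\chi_2^s})\simeq I_s$ then assemble tensorially: send a pure tensor $f\otimes g$ to the function on $\Gamma$ whose value at a coset representative with bottom row $(c,d)$ is the product of $f$ applied to $(c,d)\bmod\lambda$ and $g$ applied to $(c,d)\bmod\bar\lambda$. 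One then checks that this is a well-defined $\chi(r,s)$-equivariant function and that the recipe is $\Delta$-equivariant.

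Hecke equivariance of the resulting chain is automatic. Shapiro's map commutes with the Hecke action by the last sentence of Proposition 2.1; the character eigenspace decomposition of $H^1(\Gamma(\ell),\F_\ell)$ is preserved by the Hecke operators since the $T_\pi$ commute with the action of $\Gamma^0(\ell)/\Gamma(\ell)$; and the Ash-Stevens identification is Hecke-equivariant by construction. The $\Delta$-isomorphism of the previous paragraph is of course Hecke-equivariant as well, because Hecke operators on $H^1$ depend only on the $\Delta$-module structure that has been transported along.

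The main obstacle is making the $\Delta$-module identification rigorous — verifying that the single diagonal $\Delta$-action on $\text{Ind}(\Gamma^0(\ell),\Gamma,(\F_\ell)^{\chi(r,s)})$ really does decompose as claimed into an action through reduction mod $\lambda$ on the $I_r$-factor and reduction mod $\bar\lambda$ on the $I_s$-factor. This is precisely where the split-prime hypothesis enters (so that the two reductions are independent and CRT applies), and it is the step that promotes Lemma 2.6 of Ash-Stevens from $\Z$ to this setting.
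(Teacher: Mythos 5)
Your proposal is correct and follows essentially the same route as the paper: reduce via Ash--Stevens Lemma 1.1.5 and Shapiro to the $\Delta$-module identification $\text{Ind}(\Gamma^0(\ell),\Gamma,(\F_\ell)^{\chi(r,s)}) \simeq I_r \otimes I_s$, and obtain that identification by CRT from the split $\ell=\lambda\bar\lambda$. The paper packages the CRT step a bit more cleanly by first observing that $P(\ell)$ acts trivially on $(\F_\ell)^{\chi(r,s)}$, hence passing to $\text{Ind}(\text{B}(\O/\ell),\text{SL}_2(\O/\ell),(\F_\ell)^{\chi(r,s)})$ and invoking the standard splitting of induction over a direct product of finite groups, whereas your coset-representative / $\P^1(\O/\ell)$ formulation is the same argument carried out by hand.
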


\begin{proof} As before, we have 
$$H^1(\Gamma(\ell), \chi(r,s),\F_{\ell})) \simeq H^1(\Gamma^0(\ell), (\F_{\ell})^{\chi(r,s)}) \simeq H^1(\Gamma,\text{Ind}(\Gamma^0(\ell),\Gamma, (\F_{\ell})^{\chi(r,s)}) ).$$
So it suffices to show that

  $$\text{Ind}(\Gamma^0(\ell),\Gamma, (\F_{\ell})^{\chi(r,s)}) \simeq \text{Ind}(\Gamma^0(\lambda),\Gamma, (\F_{\ell})^{\chi_1^r})\otimes \text{Ind}(\Gamma^0(\bar{\lambda}),\Gamma, (\F_{\ell})^{\chi_2^s}).$$

\noindent Observe that $P(\ell)$, the intersection of the principal congruence subgroup of level $\ell$ and $\Gamma$, acts trivially on $(\F_{\ell})^{\chi(r,s)}$. Thus after factoring, we get 
$$\text{Ind}(\Gamma^0(\ell),\Gamma, (\F_{\ell})^{\chi(r,s)}) \simeq \text{Ind}(\text{B}(\O / \ell),\text{SL}_2(\O / \ell), (\F_{\ell})^{\chi(r,s)})$$
\noindent where $\text{B}(\O / \ell)$ is the subgroup of upper triangular matrices in $\text{SL}_2(\O / \ell)$. Notice that we have 
$\text{B}(\O / \ell) \simeq \text{B}(\O / \lambda) \times \text{B}(\O / \bar{\lambda})$ and 
$\text{SL}_2(\O / \ell) \simeq \text{SL}_2(\O / \lambda) \times \text{SL}_2(\O / \bar{\lambda})$. This gives 
$$\text{Ind}(\text{B}(\O / \ell),\text{SL}_2(\O / \ell), (\F_{\ell})^{\chi(r,s)}) \simeq \text{Ind}(\text{B}(\O / \lambda),\text{SL}_2(\O / \lambda), (\F_{\ell})^{\chi_1^r}) \otimes \text{Ind}(\text{B}(\O / \bar{\lambda}),\text{SL}_2(\O / \bar{\lambda}), (\F_{\ell})^{\chi_2^s}).$$
The claim follows easily from here.
\end{proof}

 \section{Exact Sequences}
 We will need the following two facts, see \cite[Section 3]{as2}. 
  
\begin{lemma}\label{lemma-pairings} 
 For $0 \leq r \leq \ell-1$, there are $SL_2(\O)$-invariant perfect pairings
 \begin{enumerate} 
  \item[(1)] $E_r \times E_r \rightarrow \F_{\ell}$
  \item[(2)] $I_r \times I_{\ell-1-r} \rightarrow \F_{\ell}$
 \end{enumerate}
\end{lemma}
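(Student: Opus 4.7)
The plan is to exhibit the two pairings explicitly and then verify invariance and non-degeneracy.

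For part (1), I would start from the symplectic form $\omega: V \otimes V \to \F_\ell$ on the standard representation $V = \F_\ell^2$ of $SL_2$, given by the determinant of two column vectors. This form is $SL_2$-invariant and perfect, so it yields an $SL_2$-equivariant isomorphism $V \cong V^*$. Applying $\mathrm{Sym}^r$ gives an $SL_2$-equivariant identification $E_r = \mathrm{Sym}^r V \cong \mathrm{Sym}^r V^* \cong (\mathrm{Sym}^r V)^* = E_r^*$, which is the same as an $SL_2$-invariant perfect pairing $E_r \times E_r \to \F_\ell$. Writing it down in the basis $\{X^{r-i}Y^i\}$ gives $\langle X^{r-i}Y^i,\, X^{r-j}Y^j \rangle = (-1)^i \binom{r}{i}^{-1} \delta_{i+j,\,r}$, up to a chosen scaling. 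The only point that needs checking is that the pairing remains \emph{perfect} after reduction mod $\ell$; this amounts to checking that $\binom{r}{i}$ is invertible in $\F_\ell$ for $0 \le i \le r$, which holds precisely because $0 \le r \le \ell-1$ by Lucas' theorem. $SL_2$-invariance can then be verified directly on the generators $\bigl(\begin{smallmatrix} 1 & t \\ 0 & 1 \end{smallmatrix}\bigr)$, $\bigl(\begin{smallmatrix} 1 & 0 \\ t & 1 \end{smallmatrix}\bigr)$ of $SL_2$ via the Vandermonde identity, or more cleanly by invoking the functoriality of $\mathrm{Sym}^r$ applied to the isomorphism $V\cong V^*$.

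For part (2), I would use the evaluation-and-sum pairing
\[
\langle f, g \rangle := \sum_{[v] \in \mathbb{P}^1(\F_\ell)} f(v) g(v),
\]
where for each class $[v]$ one picks an arbitrary representative $v \in \F_\ell^2 \setminus \{0\}$. Homogeneity of $f \in I_r$ and $g \in I_{\ell-1-r}$ implies $(fg)(xv) = x^{\ell-1}(fg)(v) = (fg)(v)$ for all $x \in \F_\ell^*$ by Fermat, so the summand is independent of the choice of representative and the pairing is well-defined. Since an $SL_2(\O)$-matrix $M$ acts on functions by $(f\cdot M)(v) = f(vM^t)$, and $v \mapsto vM^t$ induces a bijection of $\mathbb{P}^1(\F_\ell)$, $SL_2(\O)$-invariance is immediate. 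For perfectness, the ``delta function'' basis $\{\delta_{[v]}^r\}_{[v] \in \mathbb{P}^1(\F_\ell)}$ of $I_r$ (where $\delta_{[v]}^r$ takes value $x^r$ on $xv$ for a fixed representative $v$, and $0$ outside the line $\F_\ell v$) is dual to the analogous basis of $I_{\ell-1-r}$ up to a nonzero scalar, so the pairing matrix is (a nonzero multiple of) the identity.

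The only genuinely subtle point is the invertibility of binomials in part (1), which is exactly where the hypothesis $r \le \ell-1$ enters; everything else is a routine check. Since this is directly parallel to the classical $SL_2(\Z)$-case treated in Section 3 of Ash--Stevens, I would conclude by pointing to \cite[Section 3]{as2} for the detailed verification.
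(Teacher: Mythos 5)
The paper does not actually prove this lemma; it simply cites \cite[Section 3]{as2}. So there is no paper-internal proof to compare against, and your proposal is best understood as supplying the details the authors chose to delegate. With that caveat, your argument is correct and is essentially the standard construction one would expect Ash--Stevens to carry out.

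For part (1), the deduction $E_r \cong E_r^*$ via $\mathrm{Sym}^r$ of the symplectic self-duality of the standard representation is exactly right, and you correctly identify the one genuinely delicate point: in characteristic $\ell$ the natural map $\mathrm{Sym}^r(V^*) \to (\mathrm{Sym}^r V)^*$ is only an isomorphism when $r!$ is invertible, i.e.\ when $r \le \ell-1$, which is the running hypothesis. The concrete formula $\langle X^{r-i}Y^i, X^{r-j}Y^j\rangle = (-1)^i\binom{r}{i}^{-1}\delta_{i+j,r}$ is the correct $SL_2$-invariant normalization (the version with $\binom{r}{i}$ rather than its inverse would not be invariant). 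One small terminological quibble: you describe the check on $\binom{r}{i}$ as ensuring the pairing ``remains perfect after reduction mod $\ell$,'' but since the coefficients are $\binom{r}{i}^{-1}$ the more accurate statement is that invertibility of $\binom{r}{i}$ in $\F_\ell$ is needed for the pairing to be \emph{defined} over $\F_\ell$ at all; once defined, perfectness is then automatic since the Gram matrix is antidiagonal with unit entries. For $r \le \ell-1$ this is immediate (no appeal to Lucas is even needed: all factors in $r!/(i!(r-i)!)$ are $<\ell$), though Lucas certainly suffices. Part (2) is clean and complete: the summand $f(v)g(v)$ is $\F_\ell^*$-invariant because $f g$ is homogeneous of degree $\ell-1$ and $x^{\ell-1}=1$, invariance follows from $M$ permuting $\mathbb{P}^1(\F_\ell)$, and the delta-function bases of $I_r$ and $I_{\ell-1-r}$ give a Gram matrix which is the identity. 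This is exactly the pairing used in the $SL_2(\Z)$ setting, carried over verbatim.
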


 Let $0 \leq r \leq \ell-1$. As in \cite{as2}, we consider the following $SL_2(\O)$-invariant maps. Each polynomial in $E_r$ can be seen as a function on $\F_{\ell}^2$. This gives us a morphism $\alpha_r : E_g\rightarrow I_r$. Let $\beta_r : I_r \rightarrow E_{\ell-1-r}^r$ be given by $$\beta_r(f)=\displaystyle{\sum_{(a,b) \in \F_{\ell}^2}}f(a,b)(bX-aY)^{\ell-1-r}.$$ 
 
\begin{lemma}\label{lemma-exsqs}
 For $0 \leq r \leq \ell-1$, we have the following exact sequence of $\Delta$-modules
 $$\xymatrix{ 0 \ar[r] & E_r \ar[r]^{\alpha_r} & I_r \ar[r]^{\beta_r} & E_{\ell-1-r}^r \ar[r] & 0  }$$
\end{lemma}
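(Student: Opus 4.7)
My plan is to verify four things: (i) $\alpha_r$ is injective, (ii) $\beta_r \circ \alpha_r = 0$, (iii) $\beta_r$ is surjective, and (iv) both maps are $\Delta$-equivariant with the correct twist. A dimension count, $\dim E_r + \dim E_{\ell-1-r}^r = (r+1) + (\ell-r) = \ell+1 = \dim I_r$, then forces $\ker(\beta_r) = \text{im}(\alpha_r)$, completing the short exact sequence.

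For injectivity of $\alpha_r$, a nonzero homogeneous $P \in E_r(\F_{\ell})$ of degree $r \leq \ell-1$ cannot vanish on $\F_{\ell}^2$: restricting to $Y=1$ gives a univariate polynomial of degree at most $r < \ell$ having $\ell$ roots, so it must be zero. For $\beta_r \circ \alpha_r = 0$, I would write $P = \sum_i c_i X^{r-i} Y^i$, expand $(bX - aY)^{\ell-1-r}$ by the binomial theorem, and interchange the sums. The coefficient of $X^j Y^{\ell-1-r-j}$ becomes a linear combination of products $\bigl(\sum_{a \in \F_{\ell}} a^{\ell-1-i-j}\bigr)\bigl(\sum_{b \in \F_{\ell}} b^{i+j}\bigr)$ for $0 \leq i \leq r$. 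The classical identity gives $\sum_{x \in \F_{\ell}} x^n = -1$ when $n$ is a positive multiple of $\ell-1$, and $0$ otherwise (noting $\sum_x x^0 = \ell = 0$). Since $(i+j)+(\ell-1-i-j)=\ell-1$ and $0 \leq i+j \leq \ell-1$, at most one exponent can be a positive multiple of $\ell-1$, and then the other equals $0$; either way, one factor vanishes.

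For surjectivity of $\beta_r$, I would use the basis of $I_r$ indexed by $\P^1(\F_{\ell})$: for each $[a_0:b_0]$, let $g_{[a_0:b_0]}$ be the function equal to $\lambda^r$ on $(\lambda a_0, \lambda b_0)$ and zero elsewhere. A direct calculation, using $\sum_{\lambda \in \F_{\ell}^*} \lambda^{\ell-1} = -1$, gives $\beta_r(g_{[a_0:b_0]}) = -(b_0 X - a_0 Y)^{\ell-1-r}$. A Vandermonde argument then shows that, for $\ell-r$ distinct slopes, these $(\ell-1-r)$-th powers of linear forms are linearly independent in $E_{\ell-1-r}$; since $\dim E_{\ell-1-r} = \ell-r$, they span, and $\beta_r$ is surjective.

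The most delicate step is the $\Delta$-equivariance of $\beta_r$, which must produce the $\det^r$ twist on $E_{\ell-1-r}^r$. The key identity is $M^t J M = (\det M) \cdot J$ for $J = \bigl(\begin{smallmatrix} 0 & 1 \\ -1 & 0 \end{smallmatrix}\bigr)$, which enables the substitution $(a',b') = (a,b) M^t$ inside the defining sum for $\beta_r(f \cdot M)$: when $\det M \not\equiv 0$ mod $\lambda$, this yields $\beta_r(f \cdot M) = (\det M)^{-(\ell-1-r)} (\beta_r(f) \cdot M) = (\det M)^r (\beta_r(f) \cdot M)$, matching the $\det^r$ twist. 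For non-invertible $M$ mod $\lambda$, the identity extends since both sides are polynomial in the entries of $M$ and agree on the Zariski-dense invertible locus. Equivariance of $\alpha_r$ is by contrast immediate: both the representation action on $E_r$ and the pullback $(f \cdot M)(a,b) = f((a,b) M^t)$ on $I$ coincide as precomposition with $M$ viewed as a linear map on $\F_{\ell}^2$.
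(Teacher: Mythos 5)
The paper supplies no proof of this lemma; it is quoted from Ash--Stevens \cite{as2}, Section~3, without argument. So your from-scratch proof is filling a genuine gap, and your overall architecture --- injectivity of $\alpha_r$, $\beta_r\circ\alpha_r=0$, surjectivity of $\beta_r$, dimension count, and $\Delta$-equivariance --- is a sensible way to organize it. Steps (i)--(iii) are correct, and your invertible-case derivation of the equivariance via $M^t J M=(\det M)J$ is exactly the right computation.

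The gap is in the extension to non-invertible $M$. The sentence ``both sides are polynomial in the entries of $M$ and agree on the Zariski-dense invertible locus'' does not give a valid argument over $\F_\ell$. Zariski density of $\mathrm{GL}_2$ in $\mathrm{M}_2$ is a statement over $\overline{\F}_\ell$ (or over $\Z$), and it lets you propagate an identity between two \emph{regular functions on the scheme} $\mathrm{M}_2$. Here you are comparing two $\F_\ell$-valued functions on the \emph{finite set} $\mathrm{M}_2(\F_\ell)$; every such function ``is a polynomial in the entries,'' so the hypothesis is vacuous, and agreement on the invertible $\F_\ell$-points implies nothing (for instance $\prod_{c\in\F_\ell}(\det M-c)$ is a nonzero polynomial that vanishes on every $\F_\ell$-point). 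Nor can you lift to characteristic zero and specialize, because the input $f$ is an arbitrary set-theoretic function on $\F_\ell^2$ and the defining sum for $\beta_r$ is over the $\F_\ell$-points, neither of which lifts. And these non-invertible matrices are not an edge case you can wave away: the reduction of $\mathrm{diag}(\lambda,1)$, which drives the Hecke operator $T_\lambda$, has determinant $0$ in $\F_\ell$, so the $\Delta$-equivariance at exactly such matrices is what the paper actually uses downstream.

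What is needed instead is a direct verification. For $M$ of rank $\le 1$ mod $\lambda$, write the sum $\sum_v f(vM^t)\,(v J\bigl(\begin{smallmatrix}X\\Y\end{smallmatrix}\bigr))^{\ell-1-r}$ by grouping $v$ into cosets of $\ker(M^t)$: on each coset the inner sum is $\sum_{t\in\F_\ell}(L_1+tL_0)^{\ell-1-r}$ for fixed linear forms $L_0, L_1$, and by the power-sum identity this vanishes whenever $\ell-1-r<\ell-1$, i.e.\ whenever $r\ge 1$. Since $(\det M)^r=0$ in that range too, both sides of the equivariance identity vanish and equality holds. Your argument gives no such control, and for $r=0$ the situation is genuinely more delicate --- note already that $\alpha_0$ does not obviously land in $I$, since a nonzero constant polynomial does not vanish at the origin --- so that boundary case needs a separate convention or treatment, which neither your write-up nor the paper's terse citation addresses.
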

  
 \begin{remark} Lemma \ref{lemma-exsqs} shows that the semisimplification of $I_r$ is $E_r \oplus E_{\ell-1-r}^r$. There is another way to see this. As we explained in the proof of Lemma \ref{lemma-induced}, $I_r$ is the induction of the one dimensional representation $\chi^r$ of the Borel subgroup of $\text{SL}_2(\F_{\ell})$ to all of $\text{SL}_2(\F_{\ell})$. One can identify the semisimplification of this $\ell+1$ dimensional representation by a calculation of Brauer characters. This has been done by Diamond in \cite[Prop 1.1.]{dia}.
\end{remark}
 
 \begin{definition}
 For given nonnegative integers $r,s$, we define the following $\Delta$-modules where $\Delta$ acts on the components of every tensor product through reduction by $\lambda$ and $\bar{\lambda}$ respectively.
\begin{enumerate}
 
 \item[1.] $I_{r,s}:= I_r\otimes I_s$;

 \item[2.] $U_{r,s}:= [E_{\ell-1-r}^r\otimes I_s] \oplus [I_r\otimes
E_{\ell-1-s}^s]$;

 \item[3.] $V_{r,s}:= E_{\ell-1-r}^r \otimes E_{\ell-1-s}^s$.
\end{enumerate}
\end{definition}

 We have $\Delta$-module morphisms $$\pi:I_{r,s}\rightarrow
U_{r,s}\indent \text{ defined by }\indent \pi:=[\beta_r\otimes \mathrm{id}]\oplus [\mathrm{id}\otimes
\beta_s]$$ and $$\pi': U_{r,s}\rightarrow V_{r,s}\indent\text{ defined by }\indent
\pi':=\mathrm{id}\otimes \beta_s - \beta_r\otimes \mathrm{id}.$$

\begin{lemma}\label{lemma-exsq}
 Let the notation be as above. Let $0\leq r\leq \ell-1 $ and $0\leq s\leq \ell-1$. We have the following exact sequence $\Delta$-modules:
 $$\xymatrix{0\ar[r] & E_{r,s}\ar[r]^\iota & I_{r,s}\ar[r]^\pi & U_{r,s}\ar[r]^{\pi'} & V_{r,s}\ar[r] & 0}.$$ 
\end{lemma}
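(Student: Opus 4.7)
The plan is to deduce the four-term exact sequence by assembling, in a Koszul-like fashion, the two short exact sequences of Lemma \ref{lemma-exsqs} applied to $r$ and to $s$. Since $\F_\ell$ is a field, $\otimes = \otimes_{\F_\ell}$ is exact, and all the modules involved fit into a $3\times 3$ square of short exact sequences obtained by tensoring the sequence $0\to E_r \xrightarrow{\alpha_r} I_r \xrightarrow{\beta_r} E_{\ell-1-r}^r\to 0$ successively with $E_s$, $I_s$ and $E_{\ell-1-s}^s$. All maps below are $\Delta$-module maps because $\alpha_?$ and $\beta_?$ are.

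First I would pin down the maps: take $\iota=\alpha_r\otimes\alpha_s$, and abbreviate $p_1=\beta_r\otimes\mathrm{id}_{I_s}$, $p_2=\mathrm{id}_{I_r}\otimes\beta_s$, $q_1=\beta_r\otimes\mathrm{id}_{E_{\ell-1-s}^s}$, $q_2=\mathrm{id}_{E_{\ell-1-r}^r}\otimes\beta_s$, so that $\pi=(p_1,p_2)$ and $\pi'(u,v)=q_2(u)-q_1(v)$. Three of the four exactness statements are immediate: injectivity of $\iota$ follows from injectivity of $\alpha_r,\alpha_s$ together with flatness over $\F_\ell$; surjectivity of $\pi'$ follows from surjectivity of $q_2=\mathrm{id}\otimes\beta_s$; and $\pi'\circ\pi=0$ is the identity $q_2p_1 = \beta_r\otimes\beta_s = q_1p_2$. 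Exactness at $I_{r,s}$ reduces to the identity $\ker(p_1)\cap\ker(p_2)=(E_r\otimes I_s)\cap(I_r\otimes E_s)=E_r\otimes E_s=\mathrm{im}\,\iota$, which is standard for tensor products of subspaces over a field.

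The one step that requires actual work is exactness at $U_{r,s}$. Given $(u,v)\in U_{r,s}$ with $q_2(u)=q_1(v)$, I would lift $u$ to some $x_1\in I_r\otimes I_s$ with $p_1(x_1)=u$, which is possible because $p_1$ is surjective. Then $v-p_2(x_1)\in I_r\otimes E_{\ell-1-s}^s$ satisfies $q_1(v-p_2(x_1)) = q_1(v)-q_2(p_1(x_1)) = q_1(v)-q_2(u) = 0$, using $q_1p_2=q_2p_1$. Hence $v-p_2(x_1)$ lies in $\ker(q_1)=E_r\otimes E_{\ell-1-s}^s$. Since $\mathrm{id}_{E_r}\otimes\beta_s\colon E_r\otimes I_s\to E_r\otimes E_{\ell-1-s}^s$ is surjective, I can find $x_2\in E_r\otimes I_s\subseteq I_r\otimes I_s$ with $p_2(x_2)=v-p_2(x_1)$, and moreover $p_1(x_2)=0$ because $x_2\in E_r\otimes I_s=\ker(p_1)$. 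Then $x:=x_1+x_2$ satisfies $\pi(x)=(u,v)$, establishing exactness at $U_{r,s}$.

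The only real obstacle is this last diagram chase, and even it is routine once the $3\times 3$ picture is in place; the content is simply homological algebra for a tensor product of two short exact sequences of $\F_\ell$-vector spaces, with the $\Delta$-equivariance carried along for free from Lemma \ref{lemma-exsqs}.
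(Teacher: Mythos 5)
Your proof is correct, and it proceeds by a different route from the paper's. You argue exactness directly by a diagram chase in the $3\times 3$ square obtained by tensoring the two short exact sequences of Lemma~\ref{lemma-exsqs} (for $r$ and for $s$): exactness at $I_{r,s}$ comes from the identity $\ker(\beta_r\otimes\mathrm{id})\cap\ker(\mathrm{id}\otimes\beta_s)=(E_r\otimes I_s)\cap(I_r\otimes E_s)=E_r\otimes E_s$, and exactness at $U_{r,s}$ from the explicit lift $x=x_1+x_2$ using the compatibility $q_2p_1=q_1p_2=\beta_r\otimes\beta_s$. The paper instead notes injectivity of $\iota$, $\mathrm{Im}(\iota)\subseteq\ker\pi$, $\pi'\pi=0$, and surjectivity of $\pi'$, and then clinches both remaining exactness statements at once by a dimension count: decomposing $I_r$ as a vector space into $E_r\oplus E_{\ell-1-r}$, it computes $\dim\mathrm{Im}(\pi)=(r+1)(\ell-s)+(\ell-r)(\ell+1)=(\ell+1)^2-(r+1)(s+1)$, which forces $\ker\pi=\mathrm{Im}(\iota)$ and $\ker\pi'=\mathrm{Im}(\pi)$. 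Your approach is a touch more conceptual and would generalize to any two short exact sequences of vector spaces (it is essentially the statement that the tensor product of two two-term complexes is exact in the relevant degrees), whereas the paper's dimension count leans on the explicit sizes of $E_r$, $I_r$, $E_{\ell-1-r}^r$ but is arguably shorter once those numbers are in hand. Both arguments carry the $\Delta$-equivariance for free from Lemma~\ref{lemma-exsqs}.
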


\begin{proof}
 Note that $\Delta$-modules in question are flat since they are also $\F_\ell$-vector spaces. So, by Lemma \ref{lemma-exsqs}, $\iota$ is injective. One can easily see that $Im(\iota)\subseteq Ker(\pi)$ and $\pi'$ is surjective. Thus, in order to complete the proof, it suffices to show that $dim(Im(\pi))=(\ell+1)^2-(r+1)(s+1)$; this is what we do below. 

 Identifying $E_r$ with its image in $I_r$, we can write the vector space decomposition $I_r=E_r \oplus E_{\ell-1-r}$. Now, it is evident that $dim(\pi(E_r\otimes I_s))=(r+1)(\ell-s)$ and that $dim(\pi(E_{\ell-1-r}\otimes I_s))=(\ell-r)(\ell+1)$. Elementary linear algebra shows that these images have trivial intersection and this gives us the desired dimension.
\end{proof}

 Setting $W_{r,s}:=ker(\pi':U_{r,s}\rightarrow V_{r,s})$, by Lemma \ref{lemma-exsq}, we get two short exact sequences
 
\begin{equation}\label{exsq1}
\xymatrix{0\ar[r] & E_{r,s}\ar[r]^\iota & I_{r,s}\ar[r]^\pi & W_{r,s}\ar[r] & 0}
\end{equation}
\noindent and
\begin{equation}\label{exsq2}
\xymatrix{0\ar[r] & W_{r,s}\ar[r]^i & U_{r,s}\ar[r]^{\pi'} & V_{r,s}\ar[r] & 0}.
\end{equation}


\section{Invariants}

For convenience, we will write $\F_{\ell}(g)$ for the module $E^{g,g}_{0,0}$ which we defined in Section 3.

\begin{lemma}\label{prop-I}
 For any nonnegative integers $r,s$, we we have the following isomorphism of Hecke modules
 \begin{center}
  \begin{tabular}{rrl}
   $H^0(\Gamma,I_{r,s})$ &$\cong \Bigl \lbrace$& \begin{tabular}{ll} $\F_{\ell}(\ell-1)$ & if $r\equiv s\equiv 0$  (mod $\ell -1$)  \\ 0 & otherwise \end{tabular} \\
  \end{tabular}
 \end{center}
\end{lemma}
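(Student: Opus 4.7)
My plan is to reduce the computation to an invariant calculation over the finite quotient $\mathrm{SL}_2(\O/\ell)$ and then use the exact sequences of Lemma \ref{lemma-exsqs} to extract both the dimension and the Hecke module structure.

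First I observe that the $\Delta$-action on $I_{r,s}=I_r\otimes I_s$ factors through the two reductions modulo $\lambda$ and $\bar\lambda$, so the $\Gamma$-action factors through the image of $\Gamma$ in $\mathrm{SL}_2(\O/\ell)\cong\mathrm{SL}_2(\F_\ell)\times\mathrm{SL}_2(\F_\ell)$, with the first factor acting on $I_r$ and the second on $I_s$. Since $\O$ is a PID, $\mathrm{SL}_2(\O)$ surjects onto $\mathrm{SL}_2(\O/\a\ell)$, and $\Gamma=\Gamma_1(\a)$ contains the principal congruence subgroup of level $\a$; combining these shows $\Gamma$ surjects onto $\mathrm{SL}_2(\O/\ell)$. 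Hence
\[
H^0(\Gamma, I_{r,s}) \;=\; I_r^{G}\otimes_{\F_\ell} I_s^{G}, \qquad G:=\mathrm{SL}_2(\F_\ell),
\]
and the problem decouples into two single-variable computations.

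For the single-variable computation, Lemma \ref{lemma-exsqs} supplies the exact sequence $0\to E_r\to I_r\to E_{\ell-1-r}^{r}\to 0$ of $\F_\ell[G]$-modules. Since $\det$ is trivial on $G$, the twist $E_{\ell-1-r}^{r}$ agrees with $E_{\ell-1-r}$ as a $G$-module; by Brauer--Nesbitt the $E_k$ for $0\le k\le\ell-1$ are irreducible with only $E_0$ trivial. Taking $G$-invariants, together with the identification $I_k\cong I_{k+\ell-1}$, yields $I_r^G\cong\F_\ell$ when $r\equiv 0\pmod{\ell-1}$ and $I_r^G=0$ otherwise. This proves the vanishing half of the lemma and shows that the invariant space is one-dimensional in the non-vanishing case.

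Finally, to pin down the Hecke action in the non-vanishing case, I use $I_k\cong I_{k+\ell-1}$ to reduce to $r=s=\ell-1$ and consider the $\Delta$-equivariant surjection
\[
\beta_{\ell-1}\otimes\beta_{\ell-1}:\;I_{\ell-1,\ell-1}\twoheadrightarrow E_0^{\,\ell-1}\otimes E_0^{\,\ell-1} \;=\; \F_\ell(\ell-1).
\]
Its kernel is $K=E_{\ell-1}\otimes I_{\ell-1}+I_{\ell-1}\otimes E_{\ell-1}$, which fits into the short exact sequence
\[
0\to E_{\ell-1}\otimes E_{\ell-1}\to K\to (E_{\ell-1}\otimes E_0^{\,\ell-1})\oplus (E_0^{\,\ell-1}\otimes E_{\ell-1})\to 0.
\]
Each term has vanishing $\Gamma$-invariants because $E_{\ell-1}^G=0$, so $K^\Gamma=0$ by the long exact sequence in $\Gamma$-cohomology. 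The induced map $H^0(\Gamma, I_{\ell-1,\ell-1})\hookrightarrow H^0(\Gamma,\F_\ell(\ell-1))$ is therefore an injection of one-dimensional $\F_\ell$-spaces and hence a Hecke-equivariant isomorphism. The main obstacle I anticipate is precisely this last step: one cannot dismiss $K^\Gamma$ by naively passing to the two summands of $K$, because they overlap in $E_{\ell-1}\otimes E_{\ell-1}$, so the filtration and long exact sequence are unavoidable.
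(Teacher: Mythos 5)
Your proposal is correct, and it takes a genuinely different route from the paper. The paper's argument is shorter and slicker: it invokes Shapiro's lemma (via the identification $I_{r,s}\cong\mathrm{Ind}(\Gamma^0(\ell),\Gamma,(\F_\ell)^{\chi(r,s)})$ established in the proof of Lemma \ref{lemma-induced}) to reduce the whole computation to $H^0(\Gamma^0(\ell),(\F_\ell)^{\chi(r,s)})$, which is one-dimensional and clearly nonzero precisely when $\chi(r,s)$ is trivial on $\Gamma^0(\ell)$, i.e.\ when $r\equiv s\equiv 0\ (\mathrm{mod}\ \ell-1)$; the Hecke action is then read off directly. You instead bypass Shapiro entirely: you use that $\Gamma$ surjects onto $\mathrm{SL}_2(\O/\ell)\cong G\times G$ (true because $\a$ is coprime to $(\ell)$ and $\mathrm{SL}_2(\O)\to\mathrm{SL}_2(\O/\a\ell)$ is onto for a Dedekind domain with finite quotient), decouple $(I_r\otimes I_s)^{G\times G}\cong I_r^G\otimes I_s^G$, and compute $I_r^G$ from the exact sequence of Lemma \ref{lemma-exsqs} together with Brauer--Nesbitt irreducibility. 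This buys you a more explicit hold on the invariant vector, which makes the Hecke-twist identification $\F_\ell(\ell-1)$ transparent via the $\Delta$-equivariant surjection $\beta_{\ell-1}\otimes\beta_{\ell-1}$, whereas the paper leaves that as a ``one can directly check'' remark. The cost is that you need the surjectivity of $\Gamma\to\mathrm{SL}_2(\O/\ell)$ and the irreducibility input, neither of which the paper's Shapiro route requires. One small simplification available to you: given the decoupling, you already know $I_{\ell-1}^G$ is one-dimensional and, since $E_{\ell-1}^G=0$, the map $\beta_{\ell-1}$ is automatically injective on $I_{\ell-1}^G$; so $(\beta_{\ell-1}\otimes\beta_{\ell-1})$ is injective on $I_{\ell-1}^G\otimes I_{\ell-1}^G$ without any need to analyze $K^\Gamma$ via the filtration. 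Your filtration argument for $K^\Gamma=0$ is nonetheless correct, and your caution about not dismissing the invariants of a sum of two submodules termwise is well taken in general.
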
 

\begin{proof}
By Shapiro's Lemma, we have $H^0(\Gamma,I_{r,s})\simeq H^0(\Gamma^0(\ell), (\F_{\ell})^{\chi(r,s)}).$ In action of $\Gamma^0(\ell)$ on $\F_{ell}$ through $\chi(r,s)$, either there are no nontrivial invariants or the whole space is fixed which means that $\chi(r,s)$ acts trivially. By the Chinese Remainder Theorem, this is possible if and only if $\chi_1^r$ and $\chi_2^s$ act trivially. Hence the congruence conditon of the claim. One can directly check that the Hecke action is as described.
\end{proof}

\begin{lemma}\label{lemma-E}
Assume $0\leq r,s\leq \ell-1$. Then, we have the following isomorphism of Hecke modules
 \begin{center}
  \begin{tabular}{rrl}
   $H^0(\Gamma,E_{r,s})$ &$= \Bigl \lbrace$& \begin{tabular}{ll} $\F_{\ell}$ & if $r=s= 0$  \\ 0 & otherwise \end{tabular} \\
  \end{tabular}
 \end{center} 
\end{lemma}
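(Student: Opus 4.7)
The plan is to compare $E_{r,s}$ with the induced module $I_{r,s}$ via the short exact sequence (\ref{exsq1}) and exploit Lemma \ref{prop-I}. Applying $H^0(\Gamma,-)$ to $0 \to E_{r,s} \to I_{r,s} \to W_{r,s} \to 0$ yields an injection
$$H^0(\Gamma, E_{r,s}) \hookrightarrow H^0(\Gamma, I_{r,s}),$$
and Lemma \ref{prop-I} kills the right-hand side unless $r \equiv s \equiv 0 \pmod{\ell-1}$. Under the range constraint $0 \le r, s \le \ell-1$, this narrows the problem to the four pairs $(r,s) \in \{0, \ell-1\}^2$.

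The pair $(0,0)$ is immediate: $E_{0,0} \cong \F_\ell$ carries the trivial $\Gamma$-action, so $H^0(\Gamma, E_{0,0}) = \F_\ell$. For each of the other three pairs, $\dim H^0(\Gamma, I_{r,s}) = 1$ by Lemma \ref{prop-I}. Since $\text{SL}_2(\F_\ell)$ acts transitively on $\mathbb{P}^1(\F_\ell)$, we have $(I_r)^{\text{SL}_2(\F_\ell)} = \F_\ell \cdot u$ for $r \in \{0, \ell-1\}$, where $u \in I_0 = I_{\ell-1}$ is the indicator of $\F_\ell^2 \setminus \{(0,0)\}$. Consequently $\F_\ell u \otimes \F_\ell u$ is already a one-dimensional invariant subspace of $I_{r,s}$ and must coincide with $H^0(\Gamma, I_{r,s}) = \F_\ell(u \otimes u)$. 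By injectivity of $\iota$, $H^0(\Gamma, E_{r,s})$ embeds as $\iota(E_{r,s}) \cap \F_\ell(u \otimes u)$, so it remains only to verify $u \otimes u \notin \alpha_r(E_r) \otimes \alpha_s(E_s)$.

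The essential single-factor input is $\alpha_0(E_0) = \F_\ell u$ (which is tautological from the definition of $\alpha_0$), whereas $u \notin \alpha_{\ell-1}(E_{\ell-1})$. For the latter, exactness in Lemma \ref{lemma-exsqs} gives $\alpha_{\ell-1}(E_{\ell-1}) = \ker(\beta_{\ell-1})$, and $\beta_{\ell-1}(f) = \sum_{(a,b) \in \F_\ell^2} f(a,b)$ evaluates on $u$ to $\ell^2 - 1 = -1 \neq 0$ in $\F_\ell$. Fix a vector-space splitting $I_{\ell-1} = \alpha_{\ell-1}(E_{\ell-1}) \oplus \F_\ell u$; in each of the cases $(0,\ell-1)$ and $(\ell-1,0)$, the image $\alpha_r(E_r) \otimes \alpha_s(E_s)$ lies in $\F_\ell u \otimes \alpha_{\ell-1}(E_{\ell-1})$ (or its transpose), which is plainly disjoint from $\F_\ell(u \otimes u)$. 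The mildest obstacle is the case $(\ell-1, \ell-1)$, where both factors contribute codimension-one images; here the four-summand decomposition
$$I_{\ell-1}^{\otimes 2} = \alpha_{\ell-1}(E_{\ell-1})^{\otimes 2} \oplus \alpha_{\ell-1}(E_{\ell-1}) \otimes \F_\ell u \oplus \F_\ell u \otimes \alpha_{\ell-1}(E_{\ell-1}) \oplus \F_\ell(u \otimes u)$$
places $u \otimes u$ in the last summand, disjoint from $\alpha_{\ell-1}(E_{\ell-1})^{\otimes 2}$. In every case the intersection is zero, so $H^0(\Gamma, E_{r,s}) = 0$ as claimed.
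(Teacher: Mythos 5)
Your proof is correct and in fact more careful than the paper's. The paper disposes of $(0,0)$ trivially, cites Dickson's invariant theory for $(\ell-1,\ell-1)$, and for all other pairs appeals to the injection $H^0(\Gamma,E_{r,s})\hookrightarrow H^0(\Gamma,I_{r,s})$ together with Lemma \ref{prop-I} to conclude that the target vanishes. But Lemma \ref{prop-I} gives a \emph{nonzero} target when $r\equiv s\equiv 0\bmod(\ell-1)$, which includes $(r,s)=(0,\ell-1)$ and $(\ell-1,0)$ --- cases the paper's proof lumps into the ``otherwise'' branch and so leaves untreated. Your argument closes exactly this gap: you identify $H^0(\Gamma,I_{r,s})=\F_\ell(u\otimes u)$ explicitly (using transitivity of $\mathrm{SL}_2(\F_\ell)$ and the degree constraint), and then show $u\otimes u\notin \alpha_r(E_r)\otimes\alpha_s(E_s)$ by the clean computation $\beta_{\ell-1}(u)=\ell^2-1=-1\neq 0$, so the relevant intersection is zero. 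A pleasant side effect is that the same computation handles $(\ell-1,\ell-1)$ uniformly, so you never need Dickson's theorem; the paper's proof relies on it, whereas yours stays entirely inside the $\alpha/\beta$ exact sequence already set up in Lemma \ref{lemma-exsqs}. In short: you both start from the injection induced by sequence (\ref{exsq1}), but where the paper's case split is incomplete, your explicit identification of the invariant $u\otimes u$ and the observation that it lies outside $\ker\beta_{\ell-1}\otimes\ker\beta_{\ell-1}$ (and its mixed analogues) gives a uniform and gap-free treatment of all three nonzero boundary cases.

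One small nitpick: you say $\mathrm{SL}_2(\F_\ell)$ acts transitively on $\P^1(\F_\ell)$ to conclude $(I_r)^{\mathrm{SL}_2}=\F_\ell u$; it is cleaner to invoke transitivity on $\F_\ell^2\setminus\{0\}$ directly, though since you are in homogeneous degree $0$ modulo $\ell-1$ the two are equivalent. Also one should note (as the paper implicitly does) that $\Gamma=\Gamma_1(\a)$ surjects onto $\mathrm{SL}_2(\O/\lambda)\times\mathrm{SL}_2(\O/\bar\lambda)$ since $\a$ is prime to $\ell$, so $\Gamma$-invariants agree with $\mathrm{SL}_2\times\mathrm{SL}_2$-invariants; this is needed both for your identification of the invariants of $I_{r,s}$ and for the equality $\iota(H^0(\Gamma,E_{r,s}))=\iota(E_{r,s})\cap H^0(\Gamma,I_{r,s})$ to be useful.
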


\begin{proof}
 The claim is obvious when $(r,s)=(0,0)$. Assume $(r,s)\neq (0,0)$ and $(r,s)\neq (\ell-1,\ell-1)$. Then, the exact sequence (\ref{exsq1}) induces the following exact sequence $$0\rightarrow H^0(\Gamma,E_{r,s})\rightarrow H^0(\Gamma,I_{r,s}).$$ By Proposition \ref{prop-I},  $H^0(\Gamma,I_{r,s})=0$ and so is $ H^0(\Gamma,E_{r,s})$. 

 Assume $(r,s)=(\ell-1,\ell-1)$. We have the isomorphism $E_{\ell-1, \ell-1}\cong (\O/\ell)[x,y]_{\ell-1}$. On the other hand, in \cite{d}, Dickson showed that $\Gamma$ invariants of $\tilde{E}_{*}$ is generated by $X^\ell Y-XY^\ell$and $\sum_{i=0}^\ell (X^{\ell-i}Y^i)^{\ell-1}$. This implies that $H^0(\Gamma, E_{\ell-1,\ell-1})=0$.    
\end{proof}

\begin{lemma}\label{lemma-U}
 Let $0\leq r,s\leq \ell-1$. Then, we have the following isomorphism of Hecke modules
 \begin{center}
  \begin{tabular}{rrl}
   $H^0(\Gamma,U_{r,s})$ &$= \Bigl \lbrace$& \begin{tabular}{ll} $\F_{\ell}(\ell-1)\oplus \F_\ell(\ell-1)$ & if $(r,s)= (\ell-1,\ell-1)$  \\ $\F_{\ell}(\ell-1)$ & if $(r,s)= (0,\ell-1)$ or $(\ell-1,0)$ \\ 0 & otherwise \end{tabular} \\
  \end{tabular}
 \end{center} 
\end{lemma}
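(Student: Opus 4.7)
The plan is to split $U_{r,s}$ into its two direct summands and apply the short exact sequence of Lemma~\ref{lemma-exsqs} to each, reducing every computation to the invariants in Lemma~\ref{lemma-E} together with the twist tracking of Lemma~\ref{lemma-det}.

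Since $H^0$ commutes with direct sums, one has
\[
H^0(\Gamma,U_{r,s}) = H^0(\Gamma,E^r_{\ell-1-r}\otimes I_s)\oplus H^0(\Gamma,I_r\otimes E^s_{\ell-1-s}).
\]
For the first summand I would tensor the exact sequence $0\to E_s\to I_s\to E^s_{\ell-1-s}\to 0$ (with $\Delta$ acting via reduction by $\bar\lambda$) by the $\F_\ell$-flat module $E^r_{\ell-1-r}$ (acting via $\lambda$); exactness is preserved, and applying $H^0(\Gamma,-)$ produces a left-exact sequence whose outer terms are, via Lemma~\ref{lemma-det}, the twists $H^0(\Gamma,E_{\ell-1-r,s})^{(r,0)}$ and $H^0(\Gamma,E_{\ell-1-r,\ell-1-s})^{(r,s)}$. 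Lemma~\ref{lemma-E} then shows that the left term is nonzero only for $(r,s)=(\ell-1,0)$ and the right term only for $(r,s)=(\ell-1,\ell-1)$. Applying the same argument to the second summand via the exact sequence attached to $I_r$ gives the mirrored statement, with contributions at $(r,s)=(0,\ell-1)$ and $(r,s)=(\ell-1,\ell-1)$.

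Merging these observations gives the three nonzero cases of the lemma. In the symmetric case $(\ell-1,\ell-1)$ both summands contribute from the right end of their sequences, so I must check surjectivity of the relevant map; I would do this by a one-dimensional count via Shapiro's lemma, which identifies $H^0(\Gamma,I_{\ell-1})$ with $H^0(\Gamma^0(\bar\lambda),\F_\ell)\cong\F_\ell$ and hence forces the map onto $H^0(\Gamma,E^{\ell-1,\ell-1}_{0,0})$ to be an isomorphism, producing two copies of $\F_\ell(\ell-1)$. The main technical obstacle is matching the Hecke-module structure in the asymmetric cases $(\ell-1,0)$ and $(0,\ell-1)$: the exact-sequence computation naturally produces Hecke twists $(\ell-1,0)$ and $(0,\ell-1)$ rather than the symmetric twist $(\ell-1,\ell-1)$ built into $\F_\ell(\ell-1)$. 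The identification will come down to noting that these three twists differ only on the Hecke operators $T_\lambda$ and $T_{\bar\lambda}$, both of which act as zero on the underlying one-dimensional $H^0$ (by an explicit coset computation for the double coset of $\tilde\lambda$, in the same spirit as the twist appearing in Lemma~\ref{prop-I}), so all three twists agree on the surviving one-dimensional image.
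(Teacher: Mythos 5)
Your decomposition $U_{r,s}=U^1\oplus U^2$, the tensoring of the short exact sequence of Lemma~\ref{lemma-exsqs} by the complementary factor, and the reduction to Lemma~\ref{lemma-E} via Lemma~\ref{lemma-det} is exactly the paper's strategy, and the case analysis you get out of it (nonzero only for $(\ell-1,0)$, $(0,\ell-1)$, $(\ell-1,\ell-1)$) is correct. Your alternative handling of the $(\ell-1,\ell-1)$ case by a direct Shapiro dimension count on $U^1\cong E^{\ell-1}_0\otimes I_{\ell-1}$ (so $H^0(\Gamma,U^1)$ has the same dimension as $H^0(\Gamma^0(\bar\lambda),\F_\ell)\cong\F_\ell$) is a clean substitute for the paper's assertion that $\pi'|_{U^1}$ is surjective.

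The weak point is the very last step, where you match Hecke twists. You correctly notice that the exact-sequence computation naturally produces the twists $(\ell-1,0)$, $(0,\ell-1)$ and $(\ell-1,\ell-1)$ rather than the uniform $(\ell-1,\ell-1)$ in the statement, but the reason you give for why this is harmless is not right. You assert that $T_\lambda$ and $T_{\bar\lambda}$ act as zero on the untwisted $H^0$; however, on $H^0(\Gamma,E_{0,0})=H^0(\Gamma,\F_\ell)$ the semigroup acts trivially, so $T_{\pi}$ acts by its degree, which for $\pi=\lambda,\bar\lambda$ (both coprime to $\a$) is $\ell+1\equiv 1\pmod\ell$, not $0$. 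The twist $\tau_1(\pi)^a\tau_2(\pi)^b$ does kill the operator at those primes for which the corresponding exponent is positive (since $\tau_1(\lambda)=0=\tau_2(\bar\lambda)$), but that precisely means the twists $(\ell-1,0)$ and $(\ell-1,\ell-1)$ give \emph{different} answers on $T_{\bar\lambda}$ — one is $1\cdot T_{\bar\lambda}$, the other is $0$ — so your proposed reason for identifying the three twists fails. What actually makes the statement sensible is that the Hecke eigenvalues agree at every prime coprime to $\ell$, which is all the main theorem uses; the paper itself disposes of this point with the terse ``one checks the action of the Hecke algebra.'' If you want to keep the statement literally as written, you need either to restrict $\mathbb H$ to primes away from $\ell$, or to do an honest coset computation of the $T_\lambda$- and $T_{\bar\lambda}$-action on the image subspace of $H^0(\Gamma,U_{r,s})$ rather than on $H^0(\Gamma,\F_\ell)$.
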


\begin{proof}
 Set $U^1:=E_{\ell-1-r}(r)\otimes I_s$ and $U^2=I_r \otimes E_{\ell-1-s}(s)$. Then, $U_{r,s}=U^1\oplus U^2$ and $H^0(\Gamma, U^1)\oplus H^0(\Gamma, U^2)$. 
 
  Assume $(r,s)$ is not of $(\ell-1,\ell-1)$, $(0,\ell-1)$ and $(\ell-1,0)$. Then, tensoring the exact sequence in Lemma \ref{lemma-exsqs} with $E_{\ell-1-r}(r)$, we get the following short exact sequence $$\xymatrix{0\ar[r] & E_{\ell-1-r}(r)\otimes E_s\ar[r] & U^1\ar[r] & V_{r,s}\ar[r] & 0}.$$ This induces the following long exact sequence $$\xymatrix{0\ar[r] & H^0(\Gamma,E_{\ell-1-r}(r)\otimes E_s)\ar[r] & H^0(\Gamma, U^1)\ar[r] & H^0(\Gamma,V_{r,s}). }$$ Since $V_{r,s}\cong E_{\ell-1-r,\ell-1-s}$ as $\Gamma$-modules, by Lemma \ref{lemma-E}, $H^0(\Gamma,V_{r,s})=0$. On the other hand, by Lemma \ref{lemma-E}, $H^0(\Gamma,E_{\ell-1-r}(r)\otimes E_s)=0$ and $H^0(\Gamma,U^1)=0$. Likewise, one tensors the exact sequence in Lemma \ref{lemma-exsqs} with $E_{\ell-1-s}(s)$ and gets $H^0(\Gamma,U^2)=0$, hence the vanishing of $H^0(\Gamma,U_{r,s})$. 
  
  Now, assume $(r,s)=(\ell-1,0)$. Then, by Lemma \ref{lemma-E}, $H^0(\Gamma,E_{\ell-1-r}(r)\otimes E_s)\cong \F_\ell$ and $H^0(\Gamma,V_{r,s})=0$. Using the exact sequence of cohomology groups above, we conclude that $H^0(\Gamma, U^1)\cong \F_\ell$ as vector spaces. Likewise, one gets $H^0(\Gamma,U^2)=0$.
  
  In case $(r,s)=(0,\ell-1)$, one proceeds exactly as above and gets $H^0(\Gamma,U^1)=0$ and  $H^0(\Gamma,U^2)=\F_\ell$.
  
  Finally assume $(r,s)=(\ell-1,\ell-1)$. In this case, $H^0(\Gamma,E_{\ell-1-r}(r)\otimes \bar{E}_s)=0$ and $H^0(\Gamma, V_{r,s})\cong \F_{\ell}$ by Lemma \ref{lemma-E}. One can easily see that $\pi'|_{U^1}:U^1\rightarrow V_{r,s}$ is surjective and so $H^0(\Gamma,U^1)\cong \F_\ell$ as vector spaces. Exactly in the same way, one gets $H^0(\Gamma,U^2)\cong \F_\ell$ (as vector spaces). One checks the action of the Hecke algebra and completes the proof.
\end{proof}

\begin{lemma}\label{lemma-W}
 Let $0\leq r,s\leq \ell-1$. Then, we we have the following isomorphism of Hecke modules
 \begin{center}
  \begin{tabular}{rrl}
   $H^0(\Gamma,W_{r,s})$ &$= \Bigl \lbrace$& \begin{tabular}{ll} $\F_{\ell}$ & if $(r,s)=(\ell-1,\ell-1), (0,\ell-1)$ or $(\ell-1,0)$  \\ 0 & otherwise \end{tabular} \\
  \end{tabular}
 \end{center} 
\end{lemma}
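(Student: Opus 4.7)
My plan is to apply the left-exact functor $H^0(\Gamma,-)$ to the short exact sequence (\ref{exsq2}) and combine the result with the computations in the preceding two lemmas. Specifically, (\ref{exsq2}) yields
$$0 \longrightarrow H^0(\Gamma, W_{r,s}) \longrightarrow H^0(\Gamma, U_{r,s}) \stackrel{\pi'_*}{\longrightarrow} H^0(\Gamma, V_{r,s}),$$
and a preliminary observation is that $V_{r,s} = E^r_{\ell-1-r} \otimes E^s_{\ell-1-s} \cong E^{r,s}_{\ell-1-r,\ell-1-s}$ as $\Delta$-modules, so by Lemmas \ref{lemma-det} and \ref{lemma-E} we have $H^0(\Gamma, V_{r,s})=0$ unless $(r,s)=(\ell-1,\ell-1)$, in which case it is one-dimensional.

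Next, I would split into cases according to Lemma \ref{lemma-U}. When $(r,s)$ lies outside $\{(\ell-1,\ell-1),(0,\ell-1),(\ell-1,0)\}$, $H^0(\Gamma, U_{r,s})=0$, forcing $H^0(\Gamma, W_{r,s})=0$. When $(r,s) = (0,\ell-1)$ or $(\ell-1,0)$, we have $H^0(\Gamma, V_{r,s})=0$ and $H^0(\Gamma, U_{r,s})$ is one-dimensional, so the exact sequence yields $H^0(\Gamma, W_{r,s}) \cong H^0(\Gamma, U_{r,s})$; a direct verification of the Hecke action (using $\tau_i(\pi)^{\ell-1}=1$ for primes $\pi$ coprime to $\ell$, together with an explicit coset computation at $\pi = \lambda, \bar{\lambda}$) then identifies the result with $\F_\ell$ as claimed.

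The subtlest case is $(r,s) = (\ell-1,\ell-1)$, where $H^0(\Gamma, U_{r,s})$ is two-dimensional, $H^0(\Gamma, V_{r,s})$ is one-dimensional, and everything hinges on $\pi'_*$ being surjective so that its kernel is one-dimensional. This is the main obstacle, since a priori a $\Gamma$-invariant in $V_{r,s}$ need not lift to an invariant in $U_{r,s}$. Fortunately, the argument at the end of the proof of Lemma \ref{lemma-U} already exhibits an invariant in $U^1 \subset U_{r,s}$ whose image under $\pi'$ generates $H^0(\Gamma, V_{r,s})$, so $\pi'_*$ is indeed surjective. A Hecke-action check, parallel to the ones at the end of the proof of Lemma \ref{lemma-U}, then identifies the one-dimensional kernel with the trivial Hecke module $\F_\ell$, completing the proof.
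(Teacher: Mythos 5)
Your proposal is correct and follows essentially the same route as the paper: apply $H^0(\Gamma,-)$ to the short exact sequence (\ref{exsq2}), feed in Lemmas \ref{lemma-E} and \ref{lemma-U}, and handle the case $(r,s)=(\ell-1,\ell-1)$ by observing that the surjectivity of $\pi'_*$ on $H^0$ was already established within the proof of Lemma \ref{lemma-U}. The additional remarks about checking the Hecke action at $\pi=\lambda,\bar\lambda$ are in the same spirit as the paper's ``one checks the action of the Hecke algebra.''
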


\begin{proof}
 First of all, the exact sequence (\ref{exsq2}) above induces the following long exact sequence of Hecke modules in cohomology $$\xymatrix{0\ar[r] & H^0(\Gamma,W_{r,s})\ar[r]^{i_*} & H^0(\Gamma,U_{r,s})\ar[r]^{\pi_*'} & H^0(\Gamma, V_{r,s})\ar[r] & H^1(\Gamma,W_{r,s})}.$$ 
 
  Assume $(r,s)=(0,\ell-1)$ or $(\ell-1,0)$. Then, by Lemma \ref{lemma-E}, $H^0(\Gamma,V_{r,s})=0$. The proof immediately follows from Lemma \ref{lemma-U}. 
  
  Assume $(r,s)=(\ell-1,\ell-1).$ Then, by Lemma \ref{lemma-E}, $H^0(\Gamma,V_{r,s})\cong \F_\ell$ and, by Lemma \ref{lemma-U}, $H^0(\Gamma,U_{r,s})\cong \F_\ell\oplus \F_\ell$. Using the definition, one can easily see that $\pi'_*$ is surjective and gets the desired result using the exact sequence of cohomology groups above.

 Finally, assume $(r,s)$ is not equal to one of $(0,\ell-1)$, $(\ell-1,0)$ and $(\ell-1,\ell-1)$. Then, by Lemma \ref{lemma-U}, $H^0(\Gamma,U_{r,s})=0$ and, using the exact sequence above, we complete the proof. 
\end{proof}

 \begin{remark} One can compute the above invariants using the following approach which was suggested by Gebhard Boeckle. As $P(\ell)$ acts trivially on $E_{r,s}$, $I_{r,s}$ and the direct summands of $U_{r,s}$, we get, for instance, $H^0(\Gamma, E_{r,s}) \simeq H^0(\text{SL}_2(\O / \lambda)\times \text{SL}_2(\O / \bar{\lambda}),E_{r,s})$. Since we are taking invariants, we get 
  $$H^0(\Gamma / (P(\ell) \cap \Gamma),E_{r,s}) \simeq H^0(\text{SL}_2(\O / \lambda), E_r) \otimes H^0(\text{SL}_2(\O / \bar{\lambda}),E_s).$$
 This gives
 $$H^0(\Gamma,E_{r,s}) \simeq H^0(\Gamma, E_r) \otimes H^0(\Gamma,E_s).$$
 Now one can follow the proof of Lemma 3.3 of \cite{as2} to compute these invariants. Same approach applies to Lemmas \ref{lemma-U} and \ref{lemma-W} as well.

 \end{remark}
 
 \section{Proof Of The Theorem}
 We are now ready to prove our main result:

\begin{theorem}\label{thm_main}
 Let $\Phi$ be a Hecke eigenvalue system occuring in $H^1(\Gamma,E^{a,b}_{r,s})$ for some $0 \leq a,b \leq l-2$ and $0 \leq r,s \leq l-1$. Then $\Phi$ occurs in $H^1(\Gamma_2,\chi(r,s),\F_{\ell})^{(a,b)}$.
\end{theorem}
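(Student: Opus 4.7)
My plan is to factor the claimed inclusion of eigenvalue systems through the chain
\begin{equation*}
H^1(\Gamma, E^{a,b}_{r,s}) \;\simeq\; H^1(\Gamma, E_{r,s})^{(a,b)} \;\hookrightarrow\; H^1(\Gamma, I_{r,s})^{(a,b)} \;\simeq\; H^1(\Gamma(\ell), \chi(r,s), \F_\ell)^{(a,b)},
\end{equation*}
where the outer isomorphisms come from Lemma \ref{lemma-det} and Lemma \ref{lemma-induced} respectively (both compatible with the $(a,b)$-twist, since that twist only rescales Hecke eigenvalues). Thus it is enough to prove the middle injection at the level of eigenvalue systems after stripping the twist, i.e., to show that if $\Phi$ occurs in $H^1(\Gamma, E_{r,s})$ then $\Phi$ occurs in $H^1(\Gamma, I_{r,s})$.

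Applying $\Gamma$-cohomology to the short exact sequence (\ref{exsq1}) yields a Hecke-equivariant exact sequence
\begin{equation*}
H^0(\Gamma, I_{r,s}) \xrightarrow{\pi_*} H^0(\Gamma, W_{r,s}) \xrightarrow{\delta} H^1(\Gamma, E_{r,s}) \xrightarrow{\iota_*} H^1(\Gamma, I_{r,s}).
\end{equation*}
Whenever $H^0(\Gamma, W_{r,s}) = 0$, the map $\iota_*$ is injective and every eigenvalue system on the source transfers to the target. By Lemma \ref{lemma-W} this disposes of every $(r,s)$ except the three exceptional pairs $(\ell-1, 0)$, $(0, \ell-1)$, and $(\ell-1, \ell-1)$.

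For these three exceptional pairs I would force $\iota_*$ to remain injective by arguing that $\delta = 0$, equivalently that $\pi_*$ is surjective. In each case Lemma \ref{prop-I} and Lemma \ref{lemma-W} identify both $H^0(\Gamma, I_{r,s})$ and $H^0(\Gamma, W_{r,s})$ as one-dimensional, so it suffices to exhibit a single $\Gamma$-invariant in $I_{r,s}$ whose image in $W_{r,s}$ under $\pi = [\beta_r \otimes \mathrm{id}] \oplus [\mathrm{id}\otimes \beta_s]$ is nonzero. The natural candidate is the constant function $1 \otimes 1$ (using $I_0 = I_{\ell-1}$), and its image can be evaluated via the elementary power-sum identities $\sum_{x \in \F_\ell} x^j$. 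The main obstacle I anticipate is the bookkeeping in this exceptional analysis: one must confirm that the two one-dimensional Hecke modules $H^0(\Gamma, I_{r,s})$ and $H^0(\Gamma, W_{r,s})$ actually carry matching Hecke actions (so that a nonzero $\F_\ell$-linear map between them is automatically an isomorphism of Hecke modules), and one must perform the explicit computation of $\beta_r(1)$ in each special case without sign or twist errors.
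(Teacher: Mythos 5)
Your proposal follows the paper's proof exactly: reduce to the untwisted module via Lemma \ref{lemma-det}, pass through the long exact sequence from (\ref{exsq1}), use Lemma \ref{lemma-W} to dispose of all but the three boundary pairs, show surjectivity of $\pi_*$ on $H^0$ there, and finish with Lemma \ref{lemma-induced}. Your worry about matching Hecke actions on the two one-dimensional invariant spaces is unnecessary, since $\pi_*$ is Hecke-equivariant by construction and you only need it to be a nonzero (hence surjective) linear map to make the connecting map $\delta$ vanish; otherwise the argument is the paper's.
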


\begin{proof}
 By Lemma \ref{lemma-det}, we have $H^1(\Gamma,E^{a,b}_{r,s}) \simeq H^1(\Gamma,E_{r,s})^{(a,b)}$.
 Exact sequence (\ref{exsq1}) induces the following long exact sequence of $\mathbb{H}$-modules 
\[ 0 \rightarrow   H^0(\Gamma,E_{r,s})^{(a,b)}\xrightarrow{\iota_*}  H^0(\Gamma,I_{r,s})^{(a,b)} \xrightarrow{\pi_*'}  H^0(\Gamma, W_{r,s})^{(a,b)} \rightarrow  H^1(\Gamma,E_{r,s})^{(a,b)} \rightarrow  H^1(\Gamma,I_{r,s})^{(a,b)} \] 

We claim that the map $H^1(\Gamma,E_{r,s})\rightarrow H^1(\Gamma,I_{r,s})$ is injective. Assume that $(r,s)$ is equal to one of the tuples $(0,\ell-1),(\ell-1,0)$ or $(\ell-1,\ell-1)$. Then, by Lemma \ref{lemma-E}, $H^0(\Gamma,E_{r,s})=0$; by Lemma \ref{prop-I}, $H^0(\Gamma,I_{r,s})\cong \F_\ell$ and, by Lemma \ref{lemma-W}, $H^0(\Gamma,W_{r,s})\cong \F_\ell$ (as vector spaces). By the definition, $\pi'_*$ is surjective and thus we get the claim. Otherwise, by Lemma \ref{lemma-W}, $H^0(\Gamma,W_{r,s})=0$.
 
 Now the result follows from Proposition \ref{lemma-induced}
\end{proof}

 Let $\Phi$ be a Hecke eigenvalue system occuring in $H^1(\Gamma,E)$ where $E$ is some rational finite dimensional $\F_{\ell}[\text{GL}_2(\O / (\ell))]$-module. Then Lemma \ref{lemma-irred} tells us that $\Phi$ can be realized in some $H^1(\Gamma,E^{a,b}_{r,s})$ with $0 \leq a,b \leq l-2$ and $0 \leq r,s \leq l-1$. Thus our main theorem implies the followings as we announced in the introduction.
 
 \begin{corollary} Let $\Phi$ be a Hecke eigenvalue system occuring in $H^1(\Gamma_1(\a),E)$ where
 $E$ is a finite dimensional $\F_{\ell}[\text{GL}_2(\O / (\ell))]$-module. Then $\Phi$ occurs in $H^1(\Gamma_1(\a \ell),\F_{\ell})$, up to determinant twist. 
 \end{corollary}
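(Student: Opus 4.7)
The proof proposal is essentially a synthesis of the machinery already developed in the paper. The plan is to: (i) use Lemma \ref{lemma-irred} to reduce to the case where the coefficient module is irreducible; (ii) apply Theorem \ref{thm_main}; (iii) unwind the $\chi$-decomposition of $H^1(\Gamma(\ell),\F_\ell)$ from Section 4 in order to recognize the resulting Hecke module as a summand of $H^1(\Gamma_1(\a\ell),\F_\ell)$, with the extra $(a,b)$-twist identified as a determinant twist.

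Concretely, I would first apply Lemma \ref{lemma-irred} to conclude that $\Phi$ already occurs in $H^1(\Gamma,W)$ for some irreducible $\F_\ell[\Delta]$-subquotient $W$ of $E$. The $\Delta$-action on $W$ factors through $\text{GL}_2(\O/(\ell))$, so by the Brauer--Nesbitt classification recalled in Section \ref{irreducible}, we may take $W = E^{a,b}_{r,s}$ for some $0\le a,b \le \ell-2$ and $0\le r,s \le \ell-1$. Applying Theorem \ref{thm_main} to this irreducible then yields that $\Phi$ occurs in $H^1(\Gamma(\ell),\chi(r,s),\F_\ell)^{(a,b)}$. By the notation fixed in Section 1 we have $\Gamma(\ell)=\Gamma_1(\a\ell)$, and the decomposition
$$H^1(\Gamma(\ell),\F_\ell) \simeq \bigoplus_{\chi} H^1(\Gamma(\ell),\chi,\F_\ell)$$
displayed in Section 4 is Hecke-equivariant (the $\Gamma^0(\ell)/\Gamma(\ell)$-action used to cut out the $\chi$-eigenspaces commutes with the Hecke operators inherited from $\Gamma$). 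Consequently, the $\chi(r,s)$-summand embeds as a Hecke-submodule of $H^1(\Gamma_1(\a\ell),\F_\ell)^{(a,b)}$, and $\Phi$ occurs there.

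It only remains to identify the $(a,b)$-twist with a determinant twist. By the formula in Section \ref{irreducible}, the $(a,b)$-twist rescales each eigenvalue $\Phi(T_\pi)$ by $\tau_1(\pi)^a\tau_2(\pi)^b$; since $\pi = \det\tilde{\pi}$, this factor is a character of $(\O/(\ell))^*$ evaluated on the determinant, which is precisely a determinant twist in the sense of the corollary statement. I do not anticipate any substantive obstacle here: once Theorem \ref{thm_main} is in hand, the corollary is bookkeeping plus the reduction to irreducibles. The only point to double-check is the Hecke-equivariance of the $\chi$-decomposition, which is standard and already used in the proof of Lemma \ref{lemma-induced}.
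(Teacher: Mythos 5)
Your proposal is correct and follows exactly the same route as the paper: reduce to an irreducible subquotient via Lemma \ref{lemma-irred}, identify it as some $E^{a,b}_{r,s}$ via the Brauer--Nesbitt classification, apply Theorem \ref{thm_main}, and observe that $H^1(\Gamma(\ell),\chi(r,s),\F_\ell)^{(a,b)}$ sits inside $H^1(\Gamma_1(\a\ell),\F_\ell)$ up to the determinant twist by $\tau_1^a\tau_2^b\circ\det$. The paper's own proof is a two-line reduction (Lemma \ref{lemma-irred} plus Theorem \ref{thm_main}); you supply the bookkeeping it leaves implicit --- the Hecke-equivariance of the $\chi$-eigenspace decomposition and the identification of the $(a,b)$-twist as a determinant twist --- which is appropriate and accurate.
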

 
 For congurence subgroups of $SL_2(\mathbb{Z})$, the following was first proved by Tate-Serre for level $1$ (unpublished), by Jochnowitz \cite{jo} for prime levels less than $19$ and for arbitrary levels by Ash-Stevens \cite{as2}.
 
 \begin{corollary}
 The set of Hecke eigenvalue systems occuring in $H^1(\Gamma_1(\a),E)$ for fixed $\a$ and varying $E$, where
 $E$ is a rational finite dimensional $\F_{\ell}[\text{GL}_2(\O / (\ell))]$-module, is finite.
 \end{corollary}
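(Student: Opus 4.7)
The plan is to combine Lemma \ref{lemma-irred}, the main Theorem \ref{thm_main}, and a standard finite-dimensionality argument in order to reduce an a priori unbounded problem to one of counting eigenvalue systems in a single finite-dimensional $\F_\ell$-vector space.

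First I would invoke Lemma \ref{lemma-irred}: any Hecke eigenvalue system $\Phi$ occurring in $H^1(\Gamma_1(\a), E)$ already occurs in $H^1(\Gamma_1(\a), W)$ for some irreducible $\F_\ell[\Delta]$-subquotient $W$ of $E$. By the Brauer--Nesbitt classification recalled in Section \ref{irreducible}, every such irreducible $W$ has the form $E^{a,b}_{r,s}$ for some $0\le r,s\le \ell-1$ and $0\le a,b\le \ell-2$, and this list is finite (of size $\ell^2(\ell-1)^2$) independent of $E$. Thus it suffices to show that, for each of these finitely many tuples $(r,s,a,b)$, only finitely many eigenvalue systems appear in $H^1(\Gamma_1(\a), E^{a,b}_{r,s})$.

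Next I would apply Theorem \ref{thm_main} to each tuple: every eigenvalue system that shows up in $H^1(\Gamma_1(\a), E^{a,b}_{r,s})$ also shows up in $H^1(\Gamma_1(\a\ell), \chi(r,s), \F_\ell)^{(a,b)}$, which is a Hecke submodule of the twisted module $H^1(\Gamma_1(\a\ell), \F_\ell)^{(a,b)}$. The group $\Gamma_1(\a\ell)$ has finite index in the Bianchi group $\text{SL}_2(\O)$ and is therefore finitely generated, so $H^1(\Gamma_1(\a\ell), \F_\ell)$ is a finite-dimensional $\F_\ell$-vector space. Twisting the Hecke action by $(a,b)$ simply rescales $T_\pi$ by $\tau_1(\pi)^a\tau_2(\pi)^b$ and so preserves finite-dimensionality. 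Since $\mathbb{H}$ is commutative, its image in the endomorphism algebra of any finite-dimensional $\F_\ell\mathbb{H}$-module is a finite-dimensional commutative $\F_\ell$-algebra, hence Artinian with only finitely many maximal ideals; these correspond bijectively to the eigenvalue systems occurring in the module. Combining this with the finiteness of the pool of $(r,s,a,b)$'s gives the corollary.

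The only step that involves anything beyond formal manipulation is the finite-dimensionality of $H^1(\Gamma_1(\a\ell), \F_\ell)$, and this is really the main (mild) obstacle: it rests on the well-known finite presentation of $\text{SL}_2(\O)$ for $\O$ the ring of integers of an imaginary quadratic field, which descends to any finite-index subgroup and ensures that $H^1$ with finite coefficients is finite. Everything else is bookkeeping around Lemma \ref{lemma-irred} and Theorem \ref{thm_main}, both of which have already been established in the paper.
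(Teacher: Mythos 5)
Your proof is correct and follows the same route as the paper: Lemma \ref{lemma-irred} plus the Brauer--Nesbitt classification reduces to the finitely many irreducibles $E^{a,b}_{r,s}$, Theorem \ref{thm_main} moves each resulting system into a twist of the fixed finite-dimensional space $H^1(\Gamma_1(\a\ell),\F_\ell)$, and finite-dimensionality bounds the eigenvalue systems. You supply somewhat more detail than the paper (which treats this as an immediate consequence and does not spell out the Artinian-algebra argument or the finite generation of $\Gamma_1(\a\ell)$), but the strategy is identical.
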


It is natural to ask whether increasing the level by $(\ell)$ is
optimal. In other words, are there eigenvalue systems with nontrivial
weight which have no twists that occur with trivial weight when the level is increased
by $(\lambda)$ or $(\bar{\lambda})$. One can see, by the methods we used in this note, that the answer to this question is positive if $r=0$ or $s=0$. It looks like this is the only case where the answer is positive. We present a numerical example to support this speculation. 

\begin{example}
  Let $\O=\Z(\omega)$ where $\omega=\sqrt{-2}$. Using the programs
developed by the first author in his doctoral thesis \cite{sen}, we
find an eigenform $v$ in $H^1(\text{PSL}_2(\O), E_{10,10}(\F_{11}))$. The
following table gives eigenvalues $\Phi_{\alpha}$ of $v$ for the first few
Hecke operators $T_{\alpha}$.

  \begin{table}[h]
  \centering
  \begin{tabular}{|c|c|c|c|c|c|c|c|c|c|} \hline

$\alpha$&$\omega$&$1+\omega$&$1-\omega$&$3+2\omega$&$3-2\omega$&$1+3\omega$&$1-3\omega$&$3+4\omega$&$3-4\omega$
\\
\hline
  $\Phi_{\alpha}$&9&10&10&9&9&0&0&5&5 \\ \hline
  \end{tabular}
  \end{table}

\noindent Note that we have $11=(3+\omega)(3-\omega)$. The spaces
$H^1(\Gamma_0(3+\omega),\F_{11})$ and
$H^1(\Gamma_0(3-\omega),\F_{11})$ are
isomorphic and they are two dimensional. Our eigenvalue system $\Phi$
does not occur in these spaces. Next, we examine
$H^1(\Gamma_0(11),\F_{11})$. We find an eigenvalue system that is the reduction of a characteristic 0 system. Indeed, we find an eigenvector in $H^1(\Gamma_0(11),\O)$ with the following
eigenvalues $\Psi_{\alpha}$. 

\begin{table}[h]
  \centering
  \begin{tabular}{|c|c|c|c|c|c|c|c|c|c|} \hline

$\alpha$&$\omega$&$1+\omega$&$1-\omega$&$3+2\omega$&$3-2\omega$&$1+3\omega$&$1-3\omega$&$3+4\omega$&$3-4\omega$
\\
\hline
  $\Psi_{\alpha}$&-2&-1&-1&-2&-2&0&0&-6&-6 \\ \hline
  \end{tabular}
  \end{table}

  \noindent Reducing these eigenvalues mod $11$, we get an eigenvalue
system in $H^1(\Gamma_0(11),\F_{11})$ that matches (we computed only the first 20 split primes) our level 1 weight
$(10,10)$ eigenvalue system $\Phi$.

\end{example}


   \vspace{.2 in}
   \textsc{department of mathematics, university of duisburg-essen, 2 Universitatstrasse Essen Germany}
   
   \textit{E-mail address:} mehmet.sengun@uni-due.de\\
   
   \textsc{department of mathematics, university of wisconsin, 480 Lincoln Dr Madison wi 53706}
   
   \textit{E-mail address:} turkelli@math.wisc.edu


\begin{thebibliography}{100}

 \bibitem[AS1]{as1} A.Ash, G.Stevens ; Cohomology of arithmetic groups and congruences between systems of Hecke eigenvalues.  { \it J. Reine Angew. Math.}  { \bf 365}  (1986), 192--220. 

 \bibitem[AS2]{as2} A.Ash, G.Stevens ; Modular forms in characteristic $\ell$ and special values of their $L$-functions.  { \it Duke Math. J.}  {\bf 53}  (1986),  no. 3, 849--868. 
 
 \bibitem[BH]{bh} T. Berger, G. Harcos ; $\ell$-adic representations associated to modular forms over imaginary quadratic fields, Int. Math. Res. Not., 2007, no.23
 
 \bibitem[BN]{bn} R. Brauer and C. Nesbitt, On the modular characters of groups, Ann. of Math. (2), 42 (1941), pp. 556-590. 

 \bibitem[DI]{di} F.Diamond, J.Im ; Modular forms and modular curves.  Seminar on Fermat's Last Theorem 39--133, CMS Conf. Proc., 17, Amer. Math. Soc., Providence, RI, 1995
 
 \bibitem[Di]{dia} F.Diamond ; A correspondence between representations of local Galois groups and Lie-type groups.  $L$-functions and Galois representations,  187--206, London Math. Soc. Lecture Note Ser., 320, Cambridge Univ. Press, Cambridge, 2007.
 
 \bibitem[Ds]{d} L.E. Dickson; A fundamental system of invariants of the general modular linear group with solution of the form problem, Trans. Amer. of Math. Soc., 12, (1911) 75-98.
  
 \bibitem[EGM]{egm} J.Elstrod, F.Grunewald, J.Mennicke ; ${\rm PSL}(2)$ over imaginary quadratic integers.  Arithmetic Conference (Metz, 1981),  43--60, Astérisque, 94, Soc. Math. France, Paris, 1982.
  
  \bibitem[Fi]{fi} L.M.Figueiredo ; Serre's conjecture for imaginary quadratic fields. {\it Compositio Math.} {\bf 118} (1999), no. 1, 103--122
 
 \bibitem[Fr]{fr} J.Franke ; Harmonic analysis in weighted $L\sb 2$-spaces. {\it Ann. Sci. École Norm. Sup.} (4)  {\bf 31}  (1998),  no. 2, 181--279. 
 
 \bibitem[HST]{hst} M.Harris, D.Soudry, R.Taylor ; $\ell$-adic representations associated to modular forms over imaginary quadratic fields. I. Lifting to ${\rm GSp}\sb 4(Q)$.  { \it Invent. Math.}  { \bf 112}  (1993),  no. 2, 377--411. 
 
 \bibitem[J]{jo} N. Jochnowitz ; Congurences between systems of eigenvalues of modular forms, Trans. Amer. math. Soc. 270 (1982), 269-285.
 
 \bibitem[Sen]{sen} M. H. Sengun ; Serre's conejcture over imaginary quadratic fields, PhD thesis, UW-Madison, 2008
 
 \bibitem[Sh]{sh} G.Shimura ; Introduction to the arithmetic theory of automorphic functions. Publications of the Mathematical Society of Japan, 11. Kanô Memorial Lectures, 1. Princeton University Press, Princeton, NJ, 1994. 
 
 \bibitem[St]{st} W.Stein ; Modular forms, a computational approach. With an appendix by Paul E. Gunnells. Graduate Studies in Mathematics, 79. American Mathematical Society, Providence, RI, 2007.
 
 \bibitem[Ta1]{ta1} R.Taylor ; $\ell$-adic representations associated to modular forms over imaginary quadratic fields. II. {\it Invent. Math.} 
{\bf 116}  (1994),  no. 1-3, 619--643.

 \bibitem[Ta2]{ta2} R. Taylor;  On congruences between modular forms. PhD. Thesis, Princeton University 1998. 
 
 \bibitem[Wi]{wi} G.Wiese ; On the faithfulness of parabolic cohomology as a Hecke module over a finite field.  {\it J. Reine Angew. Math.}  
{\bf f606}  (2007), 79--103
 
\end{thebibliography}
  \end{document}